\numberwithin{equation}{section}
\newtheorem{thm}{Theorem}
\newtheorem{rem}{Remark}
\newtheorem{lem}[thm]{Lemma}
\newcommand{\R}{ \ensuremath{\mathbb{R}} }
\newcommand{\N}{ \ensuremath{\mathbb{N}} }
\newcommand{\eps}{\varepsilon}
\newcommand{\E}{\mathrm{e}}
\newcommand{\D}{\mathrm{d}}
\title{A convection-diffusion problem with a small variable diffusion coefficient}
\author{Hans-G. Roos and Martin Schopf}
\date{\today}
\begin{document}

\maketitle

\begin{abstract} Consider a singularly perturbed convection-diffusion problem with a small, variable
diffusion. Based on certain a priori estimates for the solution we prove robustness of a finite element
method on a Duran-Shishkin mesh.
\end{abstract}

  \emph{Key words:} singular perturbation, finite element method, layer-adapted mesh

  \emph{MSC (2000)} 65N30

\begin{section}{Introduction}
Consider the one dimensional boundary value problem
\begin{gather}
   \begin{split}
		\mathcal{L}_\eps u \coloneqq -(\eps u')' - b u' + c u &= f \quad \text{in $(0,1)$},\\
		u(0) &= 0,\\
		u(1) &= 0,
	\end{split}
	\label{eq:prob}
\end{gather}
with smooth functions $\eps,b,c,f:[0,1]\to\R$, satisfying
\begin{gather}
	\begin{aligned}
		0 &< \beta < b(x)\\
		0 &< \underline \eps \le \eps(x) \le \overline \eps \ll 1
	\end{aligned} \quad \text{for $x \in [0,1]$}.
	\label{eq:bandeps}
\end{gather}
Moreover we assume
\begin{gather}
	c \ge 0, \quad  c+ b'/2\ge \gamma>0,\label{new}
\end{gather}
which can be ensured using the assumptions \eqref{eq:bandeps} and the transformation $u = \hat u \E^{\delta x}$ with suitably chosen constant $\delta$, see, for instance, \cite{RST96}.

We do not know any results concerning robust numerical methods for such problems, the only exceptions
are \cite{FO10,FO11}, where $\eps(x)$ has piecewise the special form $\eps_ip_i(x)$ in $\Omega_i$ with different
parameters $\eps_i$.

Assuming additionally $\eps'>-\beta$, we have an outflow boundary layer at $x=0$. It is relatively
technical to prove a priori estimates for derivatives of $u$ and to prove the existence of a solution
decomposition into a smooth part and a layer part. But this can be done with well known techniques
(Kellogg/Tsan; use of extended domains), see the Appendix.

 Under additional assumptions
($\eps'$  is nonnegative and bounded; moreover conditions on $\eps''$, see Theorem 10 and Remark 4) we have: There exists a solution
decomposition
\[
   u=S+E
\]
with
\begin{subequations}\label{dec}
\begin{align}
			|S^{(k)}(x)| & \le C\quad \text{for $k=0,1,2$}, \label{eq:boundS01*}\\
			\shortintertext{and}
			\big |E^{(k)}(x) \big | & \le C \frac{1}{\eps(x)^k} \E^{-\beta e(x)} \quad \text{for $k=0,1,2$}, \label{eq:boundE01*}
		\end{align}
	\end{subequations}
here
\[
  e(x)=\int_0^x\frac{1}{\eps(t)}dt.
\]

Based on the solution decomposition we are going to analyze the finite element method on a special mesh. The weak formulation of the problem uses the bilinear form
\begin{equation}
a(v,w):=(\eps v',w')-(bv',w)+(cv,w).
\end{equation}
Let $V_h\in H_0^1(0,1)$ be the space of linear finite elements. We look for $u_h\in V_h$ such that
\begin{equation}
a(u_h,v_h)=(f,v_h)\quad {\rm for\quad all}\quad v_h\in V_h.
\end{equation}
Define an energy norm by
\[
  \|v\|_{\eps}^2:=\|\eps^{1/2}v'\|_0^2+\|v\|_0^2.
\]
Then we ask: {\it on which layer adapted mesh  can we prove an (almost) robust error estimate for our
finite element method in that energy norm?}

\end{section}
\begin{section}{The mesh and the interpolation error}
Near the layer we use a fine graded mesh, otherwise an equidistant mesh with the step size $h$.
First we introduce a point $\tau^*$ satisfying
\begin{gather}
	e(\tau^*) =- \frac{2}{\beta} \ln h.
	\label{eq:tau}
\end{gather}
Observe that as $e(0) = 0$ and $e$ is strictly increasing \eqref{eq:tau} has a unique solution.

Since $e$ is strictly increasing the choice \eqref{eq:tau} also implies
\begin{gather}
	\E^{-\beta e(x)} \le \E^{-\beta e(\tau^*)} \le h^{2} \qquad \text{for $x \ge \tau^*$}.
	\label{eq:motiv_tau}
\end{gather}
Moreover, $\tau^*$ satisfies
\begin{equation}
- \frac{2}{\beta}\underline \eps \ln h\le \tau^*\le - \frac{2}{\beta}\overline \eps \ln h.
\label{tau}
\end{equation}
\end{section}
Following \cite{DL06}, we introduce near $x=0$ the graded mesh (D-L mesh)
\begin{gather}\label{eq:durangrid}
	\left \{ \begin{aligned}
		x_0 &= 0,\\
		x_1 &= h\delta\underline \eps	,\\
		x_{i+1} &= x_i + h x_i, \quad \text{for } 1\le i \le N^*.
		\end{aligned} \right.
\end{gather}
We choose $N^*$ in such a way that $\tau=x_{N^*+1}$ is the first point with
$\tau\ge \tau^*$. Then, $\tau$ has similar properties as $\tau^*$. In the subinterval
$[\tau,1]$ we use an equidistant mesh with a mesh size of order $O(h)$.

To simplify the notation, we introduce the symbol $\preceq$ and note $A\preceq B$, if there exists a constant
$C$ independent of $\eps$, such that $A\le C B$.

 Because the smooth part $S$ satisfies
$|S''|\le C$, we have for the interpolation error of the piecewise linear interpolant
\[
\|S-S^I\|_0\preceq h^2,\quad\quad |S-S^I|_1\preceq h.
\]
On $[\tau,1]$ we obtain for the layer component
\[
\|E-E^I\|_{0,[\tau,1]}\preceq \|E\|_{\infty,[\tau,1]}\preceq h^2.
\]
Moreover, by an inverse inequality
\begin{equation}
\|\eps^{1/2}(E-E^I)'\|^2_{0,[\tau,1]}\preceq \int_\tau^1 \frac{1}{\eps(x)}e^{-2\beta e(x)}+
     \frac{1}{h^2}\|\eps^{1/2}E^I\|^2_{0,[\tau,1]}\preceq h^2.
\end{equation}

Next we study the interpolation error on the fine subinterval $[0,\tau]$, using the definition of the mesh,
the estimate of $E''$ and $x\le e(x)\eps(x)$:
\begin{gather}\label{}
	\begin{split}
	\|\eps^{-1/2}(E-E^I)\|^2_{0,[0,\tau]}	 &=\int_0^{x_1}\eps^{-1}(E-E^I)^2+
                    \sum_1^{N^*}\int_{x_i}^{x_{i+1}}\eps^{-1}(E-E^I)^2 \\
		& \preceq h^4+\sum_1^{N^*}\int_{x_i}^{x_{i+1}}\eps^{-5}(hx_i)^4e^{-2\beta e(x)}\\
        & \preceq h^4+h^4\int_0^\tau\eps^{-5}x^4e^{-2\beta e(x)}\le h^4\left(1+\int_0^\tau\eps^{-1}(e(x))^4e^{-2\beta e(x)}\right)\\
        & \preceq h^4\left( 1+\int_0^\infty s^4e^{-2\beta s )}\right)\preceq h^4.
	\end{split}
\end{gather}
Thus we obtain
\begin{equation}\label{int}
\|\eps^{-1/2}(E-E^I)\|_{0,[0,\tau]}\preceq h^2 \quad {\rm and}\quad
\|E-E^I\|_{0,[0,\tau]}\preceq h^2\|\eps^{1/2}\|_{0,[0,\tau]}.
\end{equation}
Similarly we get
\begin{gather}\label{}
	\begin{split}
	\|\eps^{1/2}(E-E^I)'\|^2_{0,[0,\tau]}	 &=\int_0^{x_1}\eps((E-E^I)')^2+\sum_1^{N^*}\int_{x_i}^{x_{i+1}}\eps(E-E^I)')^2  \\
		& \preceq h^2\left(1+\int_0^\infty s^2e^{-2\beta s}\right),
	\end{split}
\end{gather}
resulting in
\begin{equation}
\|\eps^{1/2}(E-E^I)'\|_{0,[0,\tau]}\preceq h .
\end{equation}

\begin{section}{The discretization error}
So far we proved $\|u-u^I\|_\eps\preceq h$ and start now to estimate $\|u_h-u^I\|_\eps$. As usual, we have
based on the coercivity of our bilinear form in the given norm
\begin{gather}\label{}
	\begin{split}
	\|u^I-u_h\|^2_\eps	 &\preceq a(u^I-u_h,u^I-u_h) \\
		&=a(u^I-u,u^I-u_h) \\
        &=(\eps (u^I-u)',v_h')-(b(u^I-u)',v_h)+(c(u^I-u),v_h)
	\end{split}
\end{gather}
with $v_h=u^I-u_h$. The first and the third term can be easily estimated, only the convection term with respect
to the layer part $E$ needs some care. We use integration by parts and on the fine part of the mesh
\[
|(E-E^I,(v_h)')|\le \|\eps^{-1/2}(E-E^I)\|_0 \|v_h\|_\eps,
\]
while on the coarse part  an inverse inequality yields
\[
|(E-E^I,(v_h)')|\preceq \frac{1}{h}\|E-E^I\|_0\|v_h\|_0\le \frac{1}{h}\|E-E^I\|_0\|v_h\|_\eps.
\]
Using \eqref{int}, we get finally
\begin{thm}
If there exists a solution decomposition with the properties \eqref{dec}, then the finite element
approximation with linear elements on our DL-Shishkin mesh satisfies
\begin{equation}
\|u_h-u\|_\eps \preceq h .
\end{equation}
\end{thm}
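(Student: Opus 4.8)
\emph{Proof sketch (plan).} The plan is to split, via the triangle inequality, $\|u_h - u\|_\eps \le \|u - u^I\|_\eps + \|u^I - u_h\|_\eps$; the first summand is already $\preceq h$ by the interpolation estimates of Section~2, so everything reduces to bounding the discrete error $\|u^I - u_h\|_\eps$. For that I would start from coercivity of $a(\cdot,\cdot)$ in $\|\cdot\|_\eps$ (a consequence of \eqref{new}) combined with Galerkin orthogonality $a(u - u_h, v_h) = 0$: setting $v_h := u^I - u_h \in V_h$ one gets $\|v_h\|_\eps^2 \preceq a(u^I - u_h, v_h) = a(u^I - u, v_h)$, whose right-hand side is the sum of a diffusion term $(\eps(u^I-u)', v_h')$, a reaction term $(c(u^I-u), v_h)$, and the convection term $-(b(u^I-u)', v_h)$.

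The diffusion and reaction terms are routine: Cauchy--Schwarz gives $|(\eps(u^I-u)', v_h')| \le \|u - u^I\|_\eps \, \|v_h\|_\eps \preceq h\|v_h\|_\eps$ and $|(c(u^I-u), v_h)| \preceq \|u - u^I\|_0 \, \|v_h\|_0 \preceq h\|v_h\|_\eps$, where $\|u-u^I\|_0 \preceq h^2$ follows from the $L^2$ interpolation bounds of Section~2 (note $\|\eps^{1/2}\|_{0,[0,\tau]} \le \overline\eps^{1/2}$ is bounded). In the convection term the smooth part is equally harmless, $|(b(S-S^I)', v_h)| \preceq |S - S^I|_1\|v_h\|_0 \preceq h\|v_h\|_\eps$, so only the layer contribution $-(b(E-E^I)', v_h)$ remains.

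This last term is the crux, and the step I expect to cause the most trouble: a direct Cauchy--Schwarz would put the derivative on $E - E^I$ and cost a forbidden factor $\eps(x)^{-1}$. Instead I would integrate by parts --- legitimate because $E - E^I$ vanishes at the endpoints $0$ and $1$ --- discarding the harmless zeroth-order remainder and arriving at $(b(E-E^I), v_h')$, and then estimate the fine and coarse parts of the mesh separately. On $[0,\tau]$ the diffusion weight is available: $|(b(E-E^I), v_h')_{[0,\tau]}| \preceq \|\eps^{-1/2}(E-E^I)\|_{0,[0,\tau]} \, \|\eps^{1/2}v_h'\|_{0,[0,\tau]} \preceq h^2\|v_h\|_\eps$ by \eqref{int}. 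On $[\tau,1]$, where $\eps$ gives nothing, I would instead use an inverse inequality $\|v_h'\|_{0,[\tau,1]} \preceq h^{-1}\|v_h\|_{0,[\tau,1]}$ together with $\|E-E^I\|_{0,[\tau,1]} \preceq h^2$ --- the latter being precisely the exponential smallness of $E$ for $x \ge \tau \ge \tau^*$ encoded in \eqref{eq:motiv_tau} --- to obtain $|(b(E-E^I), v_h')_{[\tau,1]}| \preceq h^{-1} \cdot h^2 \cdot \|v_h\|_\eps = h\|v_h\|_\eps$. Adding the four contributions yields $\|v_h\|_\eps^2 \preceq h\|v_h\|_\eps$, hence $\|u^I - u_h\|_\eps \preceq h$, and the triangle inequality completes the proof. \qed
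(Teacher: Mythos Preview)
Your proposal is correct and follows essentially the same route as the paper: coercivity plus Galerkin orthogonality reduce the discrete error to $a(u^I-u,v_h)$, the diffusion and reaction terms are handled by Cauchy--Schwarz, and the convection term for the layer part is treated by integration by parts followed by the weighted estimate \eqref{int} on $[0,\tau]$ and an inverse inequality combined with $\|E-E^I\|_{0,[\tau,1]}\preceq h^2$ on $[\tau,1]$. If anything you are slightly more explicit than the paper (e.g.\ in tracking the $b'$-remainder after integration by parts and in separating the $S$-contribution of the convection term), but the argument is the same.
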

Remark that our result is not fully robust: the number of mesh points used is of order
$O(\psi(\eps,h)\frac{1}{h})$, where $\psi(\eps,h)$ can be estimate by
   $\ln((\overline\eps)/(\underline\eps))+\ln((-\ln h)/h)$.

\end{section}

\begin{section}{Appendix}
Consider  the one dimensional boundary value problem
\begin{gather}
   \begin{split}
		\mathcal{L}_\eps u\coloneqq -(\eps u')' - b u' + c u &= f \quad \text{in $(0,1)$},\\
		u(0) &= 0,\\
		u(1) &= 0.
	\end{split}
	\label{eq41}
\end{gather}
Assume \eqref{eq:bandeps} and \eqref{new}.

The differential equation in \eqref{eq41} can be rewritten in the equivalent form
\begin{gather}
	-\eps u'' - (b + \eps') u' + c u = f.
	\label{eq:prob2}
\end{gather}
Thus the first derivative of $\eps$ has a crucial influence on the behavior of the exact solution: If for instance $\eps' < -b$ then the outflow boundary will shift to the point $x=1$ leading to the formation of an exponential boundary layer at that point. We shall consider the case $\eps' > -\beta \ge -b$ leaving the outflow boundary point at the origin of the unit interval.

\begin{lem}\label{lem:trafoabsch}
	Let $u$ be the solution of \eqref{eq41} and $T$ be the coordinate transformation
	\begin{gather}
		\xi = T(x) = \int_0^x \sqrt{\frac{\underline \eps}{\eps(t)}} \D t,
		\label{eq:trafo}
	\end{gather}
	mapping the domain $(0,1)$ to $\big(0,T(1) \big)$. Then in the transformed variable $\xi$ it holds
	\begin{gather}
	\label{eq:trafoestimuk}
		|\tilde u^{(k)}(\xi)| \le C \left( 1+ \underline \eps^{-k} \E^{-\frac{\sigma + 2 \beta}{2 \sqrt{\underline \eps \overline \eps}}\xi}\right)
	\end{gather}
	with $\tilde u \coloneqq u \circ T^{-1}$ and $\sigma \coloneqq \min_{z \in [0,1]} \eps'(z) > -\beta$.
\end{lem}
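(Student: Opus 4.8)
The plan is to transform the equation \eqref{eq41} under the change of variables $\xi = T(x)$ and then apply a classical Kellogg--Tsan type a priori estimate, valid for a constant-coefficient-like perturbed operator, to the transformed problem. First I would compute how the derivatives transform: writing $\tilde u(\xi) = u(T^{-1}(\xi))$ and $x = T^{-1}(\xi)$, one has $\D \xi / \D x = \sqrt{\underline\eps/\eps(x)}$, so that $u'(x) = \tilde u'(\xi)\sqrt{\underline\eps/\eps(x)}$ and, differentiating once more, $u''(x) = \tilde u''(\xi)\,\underline\eps/\eps(x) + \tilde u'(\xi)\cdot\big(\text{terms involving }\eps',\eps\big)$. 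Substituting into the form \eqref{eq:prob2}, namely $-\eps u'' - (b+\eps')u' + cu = f$, the leading term becomes $-\eps\cdot(\underline\eps/\eps)\tilde u'' = -\underline\eps\,\tilde u''$, which is now a \emph{constant} diffusion coefficient. The first-order term collects into $-\tilde b(\xi)\tilde u'$ for some transformed convection coefficient $\tilde b$, and the zeroth-order and right-hand side become $\tilde c(\xi)\tilde u = \tilde f(\xi)$. The key point to check here is that the transformed convection coefficient stays bounded below by a positive constant controlled by $\sigma$ and $\beta$: since $b + \eps' > \beta + \sigma$ pointwise (using $\eps' \ge \sigma$) and the Jacobian factors $\sqrt{\underline\eps/\eps}$ lie between $\sqrt{\underline\eps/\overline\eps}$ and $1$, one gets $\tilde b(\xi) \ge (\sigma + \text{something})\cdot(\text{positive constant})$; chasing the constants is exactly what produces the exponent $\tfrac{\sigma+2\beta}{2\sqrt{\underline\eps\,\overline\eps}}$ in \eqref{eq:trafoestimuk}.

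Once the problem is in the form $-\underline\eps\,\tilde u'' - \tilde b\tilde u' + \tilde c\tilde u = \tilde f$ on $(0,T(1))$ with homogeneous boundary data, I would invoke the standard a priori derivative bounds for singularly perturbed convection-diffusion problems with constant leading coefficient (Kellogg--Tsan, or the presentation in \cite{RST96}): these give $|\tilde u^{(k)}(\xi)| \le C(1 + \underline\eps^{-k}\E^{-\beta_0\xi/\underline\eps})$ for $k=0,1,2$, where $\beta_0$ is the lower bound for the transformed convection. Here one has to be slightly careful because the nominal "small parameter" of the transformed problem is $\underline\eps$, but the transformed convection lower bound is of size $\sim(\sigma+2\beta)\sqrt{\underline\eps/\overline\eps}$ rather than $O(1)$; tracking this through the layer-width computation $\beta_0/\underline\eps \rightsquigarrow (\sigma+2\beta)/(2\sqrt{\underline\eps\,\overline\eps})$ is what yields the stated decay rate. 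The use of extended domains (as the authors mention) is the technical device that makes the derivative bounds on $[0,T(1)]$ rather than only in the interior.

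The main obstacle I anticipate is not the abstract structure but the bookkeeping of constants in the transformed coefficients — in particular verifying that $\tilde b$ is bounded below by a constant uniform in $\eps$ (of the right order) and that the extra first-order terms generated by the second derivative of $T$ (which involve $\eps'$ and $\eps''$) do not spoil coercivity or the sign condition \eqref{new} after transformation. This is presumably where the additional hypotheses on $\eps''$ alluded to in the introduction (``conditions on $\eps''$, see Theorem 10 and Remark 4'') enter: one needs $\eps''$ controlled so that $\tilde c + \tilde b'/2$ remains positive, ensuring the transformed problem still satisfies a maximum-principle / coercivity hypothesis under which the Kellogg--Tsan estimates apply. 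The remaining steps — substituting $x = T^{-1}(\xi)$ back if needed, and absorbing the bounded factors $\sqrt{\eps/\underline\eps} \le \sqrt{\overline\eps/\underline\eps}$ — are routine.
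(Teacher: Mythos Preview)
Your approach is essentially the paper's: transform so that the leading coefficient becomes the constant $\underline\eps$, verify a positive lower bound $\tilde\beta$ for the transformed convection, and invoke the classical constant-$\eps$ derivative bounds, with $\tilde\beta/\underline\eps$ giving the stated decay exponent.

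Two clarifications, since your anticipated obstacles are not actually present. First, $T''(x) = -\tfrac{1}{2}\sqrt{\underline\eps}\,\eps(x)^{-3/2}\eps'(x)$ involves only $\eps'$, not $\eps''$; carrying out the substitution exactly gives the transformed convection coefficient $\tilde b = \tfrac{2b+\eps'}{2}\sqrt{\underline\eps/\eps}$, so that $\tilde b \ge \tfrac{\sigma+2\beta}{2}\sqrt{\underline\eps/\overline\eps} \eqqcolon \tilde\beta$ follows immediately from $b\ge\beta$, $\eps'\ge\sigma$, $\eps\le\overline\eps$. Second, the Kellogg--Tsan pointwise derivative bounds require only $\tilde b \ge \tilde\beta > 0$ and $\tilde c = c\circ T^{-1} \ge 0$ (which holds by \eqref{new}), not the variational coercivity condition $\tilde c + \tilde b'/2 > 0$; hence no hypothesis on $\eps''$ is needed for this lemma. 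The $\eps''$ conditions alluded to in the introduction enter only later, in the refined second-derivative estimate of Lemma~\ref{lem:u2estim}, and the extended-domain device is likewise used only in the decomposition theorem, not here.
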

\begin{proof}
	Let $T$ be the coordinate transformation defined by \eqref{eq:trafo}.
	As strict monotone mapping $T$ is injective and therefore $T^{-1}:[0,T(1)]\rightarrow [0,1]$, $\xi \mapsto x$ exists.
	The chain rule yields for $\tilde u \big(T(x) \big) = u(x)$ and $x \in (0,1)$:
	\begin{align*}
		u'(x) &= \frac{\D}{\D x} \tilde u \big(T(x) \big) = \tilde u'\big(T(x)\big) T'(x),\\
		u''(x) &= \tilde u''\big( T(x) \big) \big( T'(x) \big)^2 + \tilde u'\big( T(x) \big) T''(x).
	\end{align*}
	Thus the differential equation \eqref{eq:prob2} is transformed into
	\begin{gather*}
		\underline \eps \tilde u''(T(x)) - \left(\big(b(x)+\eps'(x)\big)\sqrt{\frac{\underline \eps}{\eps(x)}}-\frac{\eps(x) \underline \eps \eps'(x)}{2 \sqrt{\frac{\underline \eps}{\eps(x)}}\big(\eps(x)\big)^2}\right) \tilde u'(T(x)) + c(x) \tilde u(T(x)) = f(x).
	\end{gather*}
	Note that the coefficient of the highest derivative of $\tilde u$ is the constant $\underline \eps$ and that the functions $\tilde c \coloneqq c \circ T^{-1}$ and $\tilde f \coloneqq f \circ T^{-1}$ remain bounded. Therefore rewriting \eqref{eq:prob2} in the new variable $\xi$ yields:
	\begin{gather*}
		\underline \eps \tilde u''(\xi) - \tilde b(\xi) \tilde u'(\xi) + \tilde c(\xi) \tilde u(\xi) = \tilde f(\xi),\quad \xi \in [0,T(1)]\\
		\tilde b = \frac{2 b \circ T^{-1} + \eps' \circ T^{-1}}{2} \sqrt{\frac{\underline \eps}{\eps \circ T^{-1}}}.
	\end{gather*}
	In order to obtain bounds on the derivatives of $\tilde u$ we need an estimate $\tilde \beta \le \tilde b(\xi)$ for $\xi \in [0,T(1)]$ with a constant $\tilde \beta > 0$. Equivalently, we provide an estimate $\tilde \beta \le \tilde b\big(T(x)\big)$ for $x \in [0,1]$:
	\begin{align*}
		\tilde b\big(T(x)\big) &= \frac{2 b(x) + \eps'(x)}{2} \sqrt{\frac{\underline \eps}{\eps(x)}} \ge \sqrt{\frac{\underline \eps}{\eps(x)}} \frac{\sigma + 2 \beta}{2}\\
		& \ge \sqrt{\frac{\,\underline \eps\,}{\overline \eps}} \frac{\sigma + 2 \beta}{2} \eqqcolon \tilde \beta > 0.
	\end{align*}
	Remark that $\underline \eps /\eps(t) \le 1$ implies $T(x) \le x$ and hence $T(1) \le 1$:
	\begin{gather*}
		T(x) = \int_0^x \sqrt{\frac{\underline \eps}{\eps(t)}} \D t \le \int_0^x \D t = x.
	\end{gather*}
	Thus, we can apply well known a-priory estimates for the case when $\eps$ is a constant to obtain
	\begin{gather*}
		|\tilde u^{(k)}(\xi)| \le C \left( 1+ \underline \eps^{-k} \E^{-\frac{\tilde \beta}{\underline \eps}\xi}\right) \le C \left( 1+ \underline \eps^{-k} \E^{-\frac{\sigma + 2 \beta}{2 \sqrt{\underline \eps \overline \eps}}\xi}\right) \qedhere
	\end{gather*}
\end{proof}

\begin{lem}\label{lem:absch}
	Set
	\begin{gather}
		\tilde e(x) \coloneqq \int_0^x \frac{1}{\sqrt{\overline \eps \eps(t)}} \D t
	\end{gather}
	The solution $u$ of Problem \eqref{eq41} satisfies
	\begin{subequations}\label{eq:estimu}
		\begin{align}
			|u(x)| &\le C \left( 1+ \E^{-\frac{\sigma + 2 \beta}{2} \tilde e(x)}\right) \label{eq:estimu0},\\
			|u'(x)| &\le C \sqrt{\frac{\overline \eps}{\eps(x)}} \left( 1+ \underline \eps^{-1} \E^{-\frac{\sigma + 2 \beta}{2} \tilde e(x)}\right) \label{eq:estimu1},\\
			|u''(x)| &\le C \left( \frac{\overline \eps}{\eps(x)} \left( 1+ \underline \eps^{-2} \E^{-\frac{\sigma + 2 \beta}{2} \tilde e(x)}\right) + \frac{\overline \eps \eps'(x)}{2 \big( \eps(x)\big)^2} \left( 1+ \underline \eps^{-1} \E^{-\frac{\sigma + 2 \beta}{2} \tilde e(x)}\right) \right) \label{eq:estimu2}.
		\end{align}
	\end{subequations}
	with $\sigma \coloneqq \min_{z \in [0,1]} \eps'(z) > -\beta$.
\end{lem}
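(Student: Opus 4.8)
The plan is to transport the constant--coefficient estimates \eqref{eq:trafoestimuk} of Lemma~\ref{lem:trafoabsch} back to the original variable $x$ through the change of variables $T$, using the chain--rule identities already recorded in the proof of that lemma,
\[
  u'(x)=\tilde u'\big(T(x)\big)\,T'(x),\qquad
  u''(x)=\tilde u''\big(T(x)\big)\big(T'(x)\big)^{2}+\tilde u'\big(T(x)\big)\,T''(x).
\]
Thus the three bounds in \eqref{eq:estimu} will follow once I (i) evaluate \eqref{eq:trafoestimuk} at $\xi=T(x)$, (ii) compute $T'$ and $T''$, and (iii) rewrite the transformed exponential factor in terms of $\tilde e$.

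The one step with any real content is that the exponent transforms exactly. From the definitions of $T$ and $\tilde e$ one has
\[
  \frac{T(x)}{\sqrt{\underline\eps\,\overline\eps}}
  =\int_0^x\frac{1}{\sqrt{\eps(t)}\sqrt{\overline\eps}}\,\D t
  =\int_0^x\frac{1}{\sqrt{\overline\eps\,\eps(t)}}\,\D t
  =\tilde e(x),
\]
i.e.\ $T(x)=\sqrt{\underline\eps\,\overline\eps}\,\tilde e(x)$, so that $\E^{-\frac{\sigma+2\beta}{2\sqrt{\underline\eps\,\overline\eps}}T(x)}=\E^{-\frac{\sigma+2\beta}{2}\tilde e(x)}$. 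Feeding this into \eqref{eq:trafoestimuk} gives $\big|\tilde u^{(k)}\big(T(x)\big)\big|\le C\big(1+\underline\eps^{-k}\E^{-\frac{\sigma+2\beta}{2}\tilde e(x)}\big)$ for $k=0,1,2$, and the case $k=0$ is already \eqref{eq:estimu0}. For the remaining two I would differentiate $T'(x)=\sqrt{\underline\eps/\eps(x)}$ to obtain $\big(T'(x)\big)^{2}=\underline\eps/\eps(x)$ and $T''(x)=-\tfrac{\sqrt{\underline\eps}\,\eps'(x)}{2\,\eps(x)^{3/2}}$, and substitute into the chain--rule formulas above.

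This produces $|u'(x)|\le C\sqrt{\underline\eps/\eps(x)}\big(1+\underline\eps^{-1}\E^{-\frac{\sigma+2\beta}{2}\tilde e(x)}\big)$ together with a two--term bound for $|u''(x)|$, so it then only remains to coarsen these estimates using $\underline\eps\le\eps(x)\le\overline\eps$: from $\sqrt{\underline\eps/\eps(x)}\le\sqrt{\overline\eps/\eps(x)}$ I get \eqref{eq:estimu1}, while $\underline\eps/\eps(x)\le\overline\eps/\eps(x)$ and $\tfrac{\sqrt{\underline\eps}}{\eps(x)^{3/2}}=\tfrac{\sqrt{\underline\eps\,\eps(x)}}{\eps(x)^{2}}\le\tfrac{\overline\eps}{\eps(x)^{2}}$ turn the $u''$ bound into \eqref{eq:estimu2} (reading $\eps'$ as $|\eps'|$ where $\eps'$ need not be sign--definite). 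I do not expect a genuine obstacle here: the entire argument rests on the single identity $T(x)=\sqrt{\underline\eps\,\overline\eps}\,\tilde e(x)$, and everything else is the chain rule together with the elementary two--sided bound on $\eps$.
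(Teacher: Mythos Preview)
Your proposal is correct and follows essentially the same route as the paper: both arguments transport the estimate \eqref{eq:trafoestimuk} back through the change of variables, using the identity $T(x)=\sqrt{\underline\eps\,\overline\eps}\,\tilde e(x)$ to rewrite the exponent and the chain rule together with $T'(x)=\sqrt{\underline\eps/\eps(x)}$, $T''(x)=-\tfrac{\sqrt{\underline\eps}\,\eps'(x)}{2\eps(x)^{3/2}}$ to handle the derivatives, then coarsening $\underline\eps\le\overline\eps$ to reach the stated prefactors. The only cosmetic difference is that the paper phrases the second-derivative step via $(u\circ T^{-1})''\big(T(x)\big)$ and the already-established bound on $|u'(x)|$, whereas you substitute directly into $u''=\tilde u''(T')^{2}+\tilde u'\,T''$; the resulting inequalities are identical.
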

\begin{proof}
	Lemma \ref{lem:trafoabsch} yields
	\begin{align*}
		\left|(u \circ T^{-1})^{(k)}(\xi) \right| = |\tilde u^{(k)}(\xi)| \le C \left( 1+ \underline \eps^{-k} \E^{-\frac{\sigma + 2 \beta}{2 \sqrt{\underline \eps \overline \eps}}\xi}\right).
	\end{align*}
	The transformation $\xi = T(x)$ gives
	\begin{gather}
		\left|(u \circ T^{-1})^{(k)}\big(T(x)\big) \right| \le C \left( 1+ \underline \eps^{-k} \E^{-\frac{\sigma + 2 \beta}{2 \sqrt{\underline \eps \overline \eps}} T(x)}\right) = C \left( 1+ \underline \eps^{-k} \E^{-\frac{\sigma + 2 \beta}{2} \tilde e(x)}\right).
		\label{eq:estimuk}
	\end{gather}
	We use \eqref{eq:estimuk} to deduce our proposition. First \eqref{eq:estimu0} is an immediate consequence of \eqref{eq:estimuk} for $k=0$. Next we want to verify \eqref{eq:estimu1}. A simple calculation yields
	\begin{gather*}
		(u \circ T^{-1})'(\xi) = u'\big(T^{-1}(\xi)\big)\left( T^{-1} \right)'(\xi) = u'\big(T^{-1}(\xi)\big) \frac{1}{T'\big(T^{-1}(\xi)\big)}.		
	\end{gather*}
	With $\xi=T(x)$ we conclude
	\begin{gather}
		(u \circ T^{-1})'\big(T(x)\big) = u'(x) \frac{1}{T'(x)}.
		\label{eq:estimu1lhs}
	\end{gather}
	Collecting \eqref{eq:estimuk} with $k=1$ and \eqref{eq:estimu1lhs} the estimate \eqref{eq:estimu1} follows.
	Same techniques yield
	\begin{gather}
		\begin{split}
			\left|(u \circ T^{-1})''\big(T(x)\big) \right| &= \left|u''(x)\frac{1}{\big(T'(x)\big)^2} - u'(x) \frac{T''(x)}{\big(T'(x)\big)^2} \right| \\
			&\ge |u''(x)|\frac{1}{\big(T'(x)\big)^2} - |u'(x)| \frac{|T''(x)|}{\big(T'(x)\big)^2}
		\end{split}
		\label{eq:estimu2lhs}
	\end{gather}
	Combining \eqref{eq:estimuk} with $k=2$ and \eqref{eq:estimu2lhs} we obtain
	\begin{align*}
		|u''(x)| &\le C \big(T'(x)\big)^2 \left( 1+ \underline \eps^{-2} \E^{-\frac{\sigma + 2 \beta}{2}\tilde e(x)}\right) + |u'(x)| |T''(x)|\\
		&\le C \frac{\overline \eps}{\eps(x)} \left( 1+ \underline \eps^{-2} \E^{-\frac{\sigma + 2 \beta}{2} \tilde e(x)}\right)+ |u'(x)| \frac{\overline \eps \eps'(x)}{2\sqrt{\frac{\overline \eps}{\eps(x)}}\big(\eps(x)\big)^2}.
	\end{align*}
	Using \eqref{eq:estimuk} with $k=1$ for the second term the proof is complete.
\end{proof}

\begin{rem}
	In the classical constant setting $\eps \equiv \underline \eps = \overline \eps$ the formulas \eqref{eq:estimu} reduce to the well-known form
	\begin{gather}
		\left| u^{(k)}(x) \right| \le C \left( 1+ \eps^{-k} \E^{-\frac{\beta}{\eps}x}\right), \quad k=0,1,2.
		\label{eq:classic}
	\end{gather}
\end{rem}


Unfortunately, all summands of the right hand side of the bounds \eqref{eq:estimu1} and \eqref{eq:estimu2} have a large multiplier if $\eps$ changes on a huge scale. Moreover the exponential decay in the estimates \eqref{eq:estimu} appears to be suboptimal. In order to provide better bounds we will use the following Lemmas.

\begin{lem}\label{lem:maxprinciple}
	The differential operator $\mathcal{L}_\eps$ obeys the following maximum principle: For any function $v \in C^2(a,b) \cap C[a,b]$
	\begin{gather*}
		\left.
		\begin{aligned}
			\mathcal{L}_\eps v &\le 0 \quad {\rm in}\,\, (a,b),\\
			v(a) &\le 0,\\
			v(b) &\le 0
		\end{aligned}
		\right \} \quad \Longrightarrow \quad v \le 0 \quad \text{on $[a,b]$.}
	\end{gather*}
\end{lem}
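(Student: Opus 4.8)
The plan is to prove the maximum principle by the classical contradiction argument that exploits the sign conditions on the coefficients, specifically that $\eps > 0$ and $c \ge 0$. Suppose for contradiction that $v$ attains a positive maximum on $[a,b]$ at some interior point $x_0 \in (a,b)$; the boundary hypotheses $v(a) \le 0$, $v(b) \le 0$ rule out the maximum being at an endpoint while $v(x_0) > 0$. At such an interior maximum we have $v'(x_0) = 0$ and $v''(x_0) \le 0$.

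The key step is to evaluate $\mathcal{L}_\eps v$ at $x_0$. Writing the operator in nondivergence form as in \eqref{eq:prob2}, namely $\mathcal{L}_\eps v = -\eps v'' - (b + \eps') v' + c v$, we get at $x_0$:
\[
	\mathcal{L}_\eps v(x_0) = -\eps(x_0) v''(x_0) - (b(x_0)+\eps'(x_0)) v'(x_0) + c(x_0) v(x_0) = -\eps(x_0) v''(x_0) + c(x_0) v(x_0).
\]
Since $\eps(x_0) > 0$ and $v''(x_0) \le 0$, the first term is $\ge 0$; since $c(x_0) \ge 0$ and $v(x_0) > 0$, the second term is $> 0$. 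Hence $\mathcal{L}_\eps v(x_0) > 0$, contradicting the hypothesis $\mathcal{L}_\eps v \le 0$ in $(a,b)$. Therefore no positive interior maximum exists, so $\max_{[a,b]} v \le \max\{v(a), v(b), 0\} \le 0$, which is the claim.

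The main subtlety — and the only real obstacle — is handling the case where the positive maximum is attained but $v''(x_0)$ is only known to be $\le 0$ rather than strictly negative, so that the argument above gives $\mathcal{L}_\eps v(x_0) > 0$ only because $c(x_0)v(x_0) > 0$ strictly; this is fine here since $v(x_0)>0$, but if one wanted the version with $c \equiv 0$ one would need the standard perturbation trick (replace $v$ by $v_\lambda = v + \lambda w$ with an auxiliary barrier $w$ satisfying $\mathcal{L}_\eps w < 0$, e.g. $w(x) = e^{\mu x}$ for large $\mu$, then let $\lambda \to 0$). Since the present statement only asserts $v \le 0$ and we have $c \ge 0$ available, the plain argument suffices and no barrier is needed. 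I would also note explicitly that $v \in C^2(a,b) \cap C[a,b]$ guarantees the maximum on the compact set $[a,b]$ is attained and that $v''$ exists wherever we evaluate it (at interior points).
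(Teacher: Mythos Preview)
Your argument contains a genuine gap. You write ``since $c(x_0) \ge 0$ and $v(x_0) > 0$, the second term is $> 0$,'' but this is false: the paper assumes only $c \ge 0$, not $c > 0$, so if $c(x_0) = 0$ the second term vanishes. In that case you obtain merely $\mathcal{L}_\eps v(x_0) = -\eps(x_0) v''(x_0) \ge 0$, which does not contradict the hypothesis $\mathcal{L}_\eps v \le 0$. You correctly identify this issue in your final paragraph and even sketch the standard cure (perturb by $\lambda\,\E^{\mu x}$ and let $\lambda \to 0$), but then you dismiss it as unnecessary on the grounds that ``we have $c \ge 0$ available.'' That reasoning is backwards: it is precisely the non-strict inequality $c \ge 0$ that forces you to use the perturbation trick; only $c > 0$ would let the plain interior-maximum argument go through unaided.

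The fix is easy and you already have it in hand: set $w(x) = \E^{\mu x}$ and check that for $\mu$ large enough $\mathcal{L}_\eps w < 0$ on $(a,b)$ (here the bound $b + \eps' > 0$ from the paper's standing assumptions is what makes the first-order term dominate). Then $v_\lambda = v + \lambda w$ satisfies $\mathcal{L}_\eps v_\lambda < 0$ strictly, so the interior-maximum argument gives a strict contradiction, hence $v_\lambda \le \max\{v_\lambda(a), v_\lambda(b), 0\}$, and letting $\lambda \downarrow 0$ yields the claim. For comparison, the paper itself does not give a proof at all but simply cites Protter--Weinberger; your write-up, once the perturbation step is actually carried out rather than waved away, would be a complete self-contained argument.
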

\begin{proof}
	A proof can be found e.g.~in \cite{PW84}.
\end{proof}

	The maximum principle applied to $v_1-v_2$ also yields a comparison principle.

\begin{lem}\label{lem:intestim}
	Let $a < x$, $\ell \in \N_0$, suppose $\eps' \ge \sigma_0 \ge 0$ on $[a,x]$ and set $e_a(t) \coloneqq \int_a^t 1/\eps(z) \D z$. Then
		\begin{gather}
			\int_a^x \eps(t)^\ell \E^{\gamma e_a(t)} \D t \le \frac{1}{\gamma + (\ell + 1)\sigma_0} \big( \eps(x)^{\ell + 1} \E^{\gamma e_a(x)} - \eps(a)^{\ell + 1} \big).
			\label{eq:exp0absch}
		\end{gather}
	for $\gamma > - (\ell + 1)\sigma_0$.
\end{lem}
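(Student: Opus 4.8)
The plan is to write the integrand as a constant multiple of an exact derivative and then integrate. Since $\eps$ is smooth and strictly positive on $[a,x]$, the function $e_a$ is differentiable with $e_a'(t) = 1/\eps(t)$, so I would first set $g(t) \coloneqq \eps(t)^{\ell+1}\E^{\gamma e_a(t)}$ and compute, by the product and chain rules,
\[
  g'(t) = (\ell+1)\,\eps(t)^{\ell}\eps'(t)\,\E^{\gamma e_a(t)} + \gamma\,\eps(t)^{\ell}\,\E^{\gamma e_a(t)} = \big((\ell+1)\eps'(t) + \gamma\big)\,\eps(t)^{\ell}\,\E^{\gamma e_a(t)},
\]
so that the factor $\eps(t)^{\ell}\E^{\gamma e_a(t)}$ appearing here is precisely the integrand to be bounded.

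Next I would invoke the hypotheses $\eps' \ge \sigma_0 \ge 0$ on $[a,x]$ and $\gamma > -(\ell+1)\sigma_0$. Together these give $(\ell+1)\eps'(t) + \gamma \ge (\ell+1)\sigma_0 + \gamma > 0$ throughout $[a,x]$, and since $\eps(t)^{\ell}\E^{\gamma e_a(t)} > 0$ we obtain the pointwise inequality
\[
  \eps(t)^{\ell}\,\E^{\gamma e_a(t)} \;\le\; \frac{1}{\gamma + (\ell+1)\sigma_0}\,g'(t), \qquad t \in [a,x].
\]
Integrating this over $[a,x]$ and using $e_a(a) = 0$, hence $g(a) = \eps(a)^{\ell+1}$, yields
\[
  \int_a^x \eps(t)^{\ell}\E^{\gamma e_a(t)}\,\D t \;\le\; \frac{g(x) - g(a)}{\gamma + (\ell+1)\sigma_0} \;=\; \frac{1}{\gamma + (\ell+1)\sigma_0}\Big(\eps(x)^{\ell+1}\E^{\gamma e_a(x)} - \eps(a)^{\ell+1}\Big),
\]
which is the assertion.

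I do not expect a genuine obstacle here: the whole argument is elementary calculus once one spots the right antiderivative $g$. The only point that deserves care is the role of the condition $\gamma > -(\ell+1)\sigma_0$ — it is exactly what makes $\gamma + (\ell+1)\sigma_0$ strictly positive, which is needed both to pass from the identity for $g'$ to a one-sided estimate via $\eps' \ge \sigma_0$ and to divide by that quantity without reversing the inequality (and to keep the stated bound finite and meaningful). No maximum principle or comparison argument is required for this particular lemma, although it is clearly intended as the integral tool feeding the sharpened a priori bounds.
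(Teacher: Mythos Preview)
Your proof is correct and is essentially the paper's argument in streamlined form: both hinge on the identity for the derivative of $g(t)=\eps(t)^{\ell+1}\E^{\gamma e_a(t)}$ and the lower bound $(\ell+1)\eps'(t)+\gamma\ge(\ell+1)\sigma_0+\gamma>0$. The paper reaches the same inequality by multiplying $\sigma_0\le\eps'$ through by $\eps^\ell\E^{\gamma e_a}$, integrating, and then integrating by parts, whereas you compute $g'$ directly and bound pointwise before integrating --- a cosmetic reordering of the same calculus.
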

\begin{proof}
	Since $\sigma_0 \le \eps'(t)$ for $t \in [a,x]$ multiplication with $\eps(t)^\ell \exp \big(\gamma e_a(t) \big) > 0$ and integration yields
	\begin{gather*}
		\sigma_0 \int_a^x \eps(t)^\ell \E^{\gamma e_a(t)} \D t \le \int_a^x \E^{\gamma e_a(t)} \eps(t)^\ell \eps'(t) \D t.
	\end{gather*}
	Integration by parts gives
	\begin{gather}
		\sigma_0 \int_a^x \eps(t)^\ell \E^{\gamma e_a(t)} \D t \le \frac{1}{\ell+1} \left( \eps(x)^{\ell+1} \E^{\gamma e_a(x)} - \eps(a)^{\ell+1} - \int_a^x \eps(t)^{\ell+1} \gamma \E^{\gamma e_a(t)} e_a'(t) \D t \right).
		\label{eq:afterpi}
	\end{gather}
	Inserting
	\begin{gather*}
		- \int_a^x \eps(t)^{\ell+1} \gamma \E^{\gamma e_a(t)} e_a'(t) \D t = - \gamma \int_a^x  \eps(t)^\ell \E^{\gamma e_a(t)} \D t
	\end{gather*}
	into \eqref{eq:afterpi} we obtain
	\begin{gather*}
		\left(\frac{\gamma}{\ell + 1} + \sigma_0\right) \int_a^x \eps(t)^\ell \E^{\gamma e_a(t)} \D t \le \frac{1}{\ell+1} \left( \eps(x)^{\ell+1} \E^{\gamma e_a(x)} - \eps(a)^{\ell+1} \right)
	\end{gather*}
	and \eqref{eq:exp0absch} follows.
\end{proof}

Next, we want to proof some pointwise bounds for the solution of the following problem in a possibly extended domain $(a,1)$ with $a \le 0$:
\begin{gather}
		-(\eps^* w')' - b^* w' + c^* w = f^* \quad \text{in $(a,1)$},\qquad w(a) = 0,\; w(1) = u_1,
		\label{eq:prob_widen}
\end{gather}
with smooth functions $\eps^*$, $b^*$, $c^*$ and $f^*$ defined on $(a,1)$ and satisfying
\begin{gather}
	\begin{aligned}
		C \underline \eps &\le \eps^*(x) \le \overline \eps\\
		\beta &\le b^*(x)\\		
		0 &\le c^*(x)
	\end{aligned} \quad \text{for $x \in [a,1]$}.
	\label{eq:bandepsstar}
\end{gather}

\begin{lem}\label{lem:absch_u}
	Suppose $0 \le (\eps^*)'$ on $[a,1]$. Then the solution $w$ of problem \eqref{eq:prob_widen} satisfies
	\begin{gather}		
		|w(x)| \le C, \quad \text{$x \in [a,1]$}.\label{eq:uestim}
	\end{gather}	
\end{lem}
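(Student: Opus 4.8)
The plan is to prove \eqref{eq:uestim} by a barrier argument based on the comparison principle that follows from Lemma~\ref{lem:maxprinciple} (which applies verbatim to the operator $\mathcal L^{*}v:=-(\eps^{*}v')'-b^{*}v'+c^{*}v$, since the starred coefficients satisfy the same structural conditions $\eps^{*}>0$, $b^{*}\ge\beta>0$, $c^{*}\ge 0$). Writing $M:=\|f^{*}\|_{\infty,(a,1)}$, I would introduce the affine barrier
\[
  \phi(x):=|u_1|+\frac{M}{\beta}\,(1-x),\qquad x\in[a,1].
\]
Since $a\le 0$ we have $1-x\ge 0$ on $[a,1]$, so $\phi\ge 0$ there, and $\phi$ is bounded by the $\eps$-independent constant $C:=|u_1|+\beta^{-1}(1-a)M$, depending only on the data $f^{*}$, $u_1$, $\beta$ and the fixed left endpoint $a$. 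The claim is then that $|w|\le\phi$ on $[a,1]$.

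The first step is to check that $\phi$ is a supersolution, i.e. $\mathcal L^{*}\phi\ge M$ on $(a,1)$. As $\phi$ is affine, $\phi''\equiv 0$, so the diffusion term collapses to $-(\eps^{*})'\phi'=\beta^{-1}M\,(\eps^{*})'\ge 0$, using precisely the hypothesis $(\eps^{*})'\ge 0$ together with $\phi'=-M/\beta<0$. The convection term is $-b^{*}\phi'=\beta^{-1}M\,b^{*}\ge M$ because $b^{*}\ge\beta$, and the reaction term satisfies $c^{*}\phi\ge 0$ because $c^{*}\ge 0$ and $\phi\ge 0$. Summing the three contributions gives $\mathcal L^{*}\phi\ge M\ge|f^{*}(x)|$ for every $x\in(a,1)$. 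Note that the bounds on $\eps^{*}$ itself are never needed: the affine choice of $\phi$ eliminates the only term in which $\eps^{*}$ (as opposed to $(\eps^{*})'$) would appear.

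Then I would run the comparison argument twice. For $v:=w-\phi$ one has $\mathcal L^{*}v=f^{*}-\mathcal L^{*}\phi\le f^{*}-M\le 0$ in $(a,1)$, while $v(a)=-\phi(a)\le 0$ and $v(1)=u_1-|u_1|\le 0$; Lemma~\ref{lem:maxprinciple} then yields $w\le\phi$. For $v:=-w-\phi$ one gets in the same way $\mathcal L^{*}v=-f^{*}-\mathcal L^{*}\phi\le -f^{*}-M\le 0$, $v(a)=-\phi(a)\le 0$, $v(1)=-u_1-|u_1|\le 0$, hence $-w\le\phi$. Combining the two inequalities gives $|w(x)|\le\phi(x)\le C$ for all $x\in[a,1]$, which is \eqref{eq:uestim}.

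There is no serious obstacle here; the argument is essentially careful sign bookkeeping in the supersolution test. The one point that really matters is that the hypothesis $(\eps^{*})'\ge 0$ is exactly what makes the affine barrier a supersolution — if $(\eps^{*})'$ were allowed to be large and negative the term $-(\eps^{*})'\phi'$ would carry the wrong sign and one would have to replace $\phi$ by a more elaborate, $\eps$-dependent barrier. Of course this affine barrier says nothing about the boundary-layer structure of $w$; the sharper exponential barriers capturing that decay would be constructed in the subsequent lemmas with the help of the integral estimate of Lemma~\ref{lem:intestim}.
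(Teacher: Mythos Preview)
Your proof is correct and essentially identical to the paper's own argument: the paper uses the same affine barrier $\psi^{\pm}(x)=\pm\beta^{-1}\|f^{*}\|_\infty(1-x)\pm|u_1|$, verifies $\mathcal L_\eps\psi^{+}\ge\|f^{*}\|_\infty$ using $b^{*}\ge\beta$, $(\eps^{*})'\ge 0$ and $c^{*}\ge 0$, checks the boundary inequalities, and then invokes the comparison principle twice. Your additional commentary about where the hypothesis $(\eps^{*})'\ge 0$ enters is accurate and to the point.
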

\begin{proof}
	Using the comparison principle induced by Lemma \ref{lem:maxprinciple} with the barrier functions $\psi^\pm$ defined by
	\begin{gather*}
		\psi^\pm(x) \coloneqq \pm \frac{1}{\beta} \|f^*\|_\infty (1-x) \pm |u_1|
	\end{gather*}
	one obtains the result, because
	\begin{align*}
		\left(\mathcal{L}_\eps \psi^+ \right)(x) &= \frac{b^*(x)+(\eps^*)'(x)}{\beta} \|f^*\|_\infty + c^*(x) \left( \frac{1}{\beta} (1-x) + |u_1| \right) \ge \|f^*\|_\infty \ge \left(\mathcal{L}_\eps w \right)(x) \quad \text{in $(a,1)$},\\
		\psi^+(a) &= \frac{1}{\beta} \|f^*\|_\infty (1-a) + |u_1| \ge 0 = w(a),\\
		\psi^+(1) &= |u_1| \ge u_1 = w(1).\\
	\end{align*}
	Hence $w \le \psi^+ \le C$ on $[a,1]$. The other bound follows similarly with $\psi^-$.
\end{proof}

The following argument is an extension of \cite{KT78}.

\begin{lem}\label{lem:absch_u1}
	Suppose $0 \le (\eps^*)'$ on $[a,1]$ and set $e_a(t) \coloneqq \int_a^t 1/\eps^*(z) \D z$. Then the solution $w$ of problem \eqref{eq:prob_widen} satisfies
	\begin{gather}		
		|w'(x)| \le C \left(1 + \frac{1}{\eps^*(x)} \E^{-\beta e_a(x)} \right), \quad \text{$x \in [a,1]$}. \label{eq:u1estim}
	\end{gather}	
\end{lem}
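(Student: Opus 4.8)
The plan is to mimic the classical Kellogg--Tsan argument, but carried out on the operator with the \emph{stretched} comparison function $\E^{-\beta e_a(x)}$ replacing the usual $\E^{-\beta(x-a)/\eps}$, and to reduce the bound on $w'$ to a bound on $w$ (which is already available from Lemma~\ref{lem:absch_u}) by representing $\eps^* w'$ through an integrating-factor / variation-of-constants formula. Concretely, observe that the equation $-(\eps^* w')' - b^* w' + c^* w = f^*$ can be written as a first-order ODE for the flux $p \coloneqq \eps^* w'$, namely $p' + \frac{b^*}{\eps^*} p = c^* w - f^*$, whose solution is
\begin{gather*}
  p(x) = p(a)\,\E^{-\int_a^x (b^*/\eps^*)\,\D t} + \int_a^x \E^{-\int_s^x (b^*/\eps^*)\,\D t}\,\bigl(c^*(s) w(s) - f^*(s)\bigr)\,\D s .
\end{gather*}
Since $b^* \ge \beta$, we have $\int_s^x (b^*/\eps^*)\,\D t \ge \beta\int_s^x (1/\eps^*)\,\D t = \beta\bigl(e_a(x)-e_a(s)\bigr)$, so each exponential factor is bounded by $\E^{-\beta(e_a(x)-e_a(s))}$. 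Using $\|w\|_\infty \le C$ from Lemma~\ref{lem:absch_u} and $\|f^*\|_\infty,\|c^*\|_\infty \le C$, the integral term is at most $C\int_a^x \E^{-\beta(e_a(x)-e_a(s))}\,\D s$; invoking Lemma~\ref{lem:intestim} with $\ell=0$, $\sigma_0=0$, $\gamma=\beta$ (applied to $\int_a^x \eps^*(s)^0\,\E^{\beta e_a(s)}\,\D s \le \tfrac{1}{\beta}\eps^*(x)\,\E^{\beta e_a(x)}$) bounds this by $C\eps^*(x)$, hence after dividing $p(x)$ by $\eps^*(x)$ this contribution is $O(1)$. The boundary term contributes $|p(a)|\,\eps^*(x)^{-1}\,\E^{-\beta e_a(x)}$, which is exactly of the claimed form provided $|p(a)| = |\eps^*(a) w'(a)|$ is bounded by a constant.

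The main obstacle, then, is controlling the flux at the inflow boundary, $|w'(a)|$. The standard Kellogg--Tsan device is to run a \emph{separate} barrier argument: one constructs on $[a,1]$ a barrier of the form $\Phi(x) = C_1 + C_2\,\E^{-\beta e_a(x)}$ (or $C_1(1-x) + C_2\E^{-\beta e_a(x)}$) with constants chosen so that $\mathcal{L}_{\eps^*}\Phi \ge \|f^*\|_\infty$ on $(a,1)$, $\Phi(a) \ge 0 = w(a)$, $\Phi(1)\ge |u_1|$, and the \emph{derivatives} at $a$ dominate. The key computation is $\mathcal{L}_{\eps^*}\bigl(\E^{-\beta e_a}\bigr) = -(\eps^*(\E^{-\beta e_a})')' - b^*(\E^{-\beta e_a})' + c^*\E^{-\beta e_a}$; since $(\E^{-\beta e_a})' = -\tfrac{\beta}{\eps^*}\E^{-\beta e_a}$, one gets $\eps^*(\E^{-\beta e_a})' = -\beta\E^{-\beta e_a}$, so $-(\eps^*(\E^{-\beta e_a})')' = -\beta^2\tfrac{1}{\eps^*}\E^{-\beta e_a}$ and $-b^*(\E^{-\beta e_a})' = \tfrac{b^*\beta}{\eps^*}\E^{-\beta e_a} \ge \tfrac{\beta^2}{\eps^*}\E^{-\beta e_a}$, so the two $\tfrac{1}{\eps^*}\E^{-\beta e_a}$ terms cancel favorably and $\mathcal{L}_{\eps^*}\bigl(\E^{-\beta e_a}\bigr) \ge 0$ (here we use $b^*\ge\beta$; the sign condition $0\le(\eps^*)'$ is not even needed for this particular computation, though it is what makes $e_a$ well-behaved). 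This is precisely where the stretched variable pays off: the decay rate $\E^{-\beta e_a}$ is the natural one for which the leading singular terms balance, just as $\E^{-\beta x/\eps}$ is in the constant-coefficient case. With $\Phi$ in hand, the comparison principle (Lemma~\ref{lem:maxprinciple}) gives $-\Phi \le w \le \Phi$ on $[a,1]$, and since $w(a) = \Phi(a)$ would require $\Phi(a)=0$ — so instead one arranges $w(a)=0\le\Phi(a)$ and uses that $w-\Phi\le 0$ with equality impossible in the interior unless... — the cleanest route is: choose $\Phi(a)=0$ by taking the affine-plus-layer barrier $\Phi(x) = \tfrac{C}{\beta}\|f^*\|_\infty(x-a) + \bigl(|u_1| + C\bigr)\E^{-\beta e_a(x)}$ suitably, ensure $\Phi(a)\ge 0$, $\Phi(1)\ge|u_1|$, $\mathcal{L}_{\eps^*}\Phi\ge\|f^*\|_\infty\ge\mathcal{L}_{\eps^*}w$, conclude $|w|\le\Phi$, and then since $w(a)=0$ and $|w|\le\Phi$ with $\Phi$ differentiable, $|w'(a)| \le \Phi'(a)$. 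Finally $\Phi'(a) = \tfrac{C}{\beta}\|f^*\|_\infty - \beta\tfrac{1}{\eps^*(a)}(|u_1|+C) \le \tfrac{C}{\eps^*(a)}$, and multiplying by $\eps^*(a)\le\overline\eps$ we obtain $|p(a)| = |\eps^*(a)w'(a)| \le C$, as required.

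Assembling the pieces: from the flux representation and $|p(a)|\le C$ we get $|w'(x)| = |p(x)|/\eps^*(x) \le C\bigl(1 + \tfrac{1}{\eps^*(x)}\E^{-\beta e_a(x)}\bigr)$ for all $x\in[a,1]$, which is \eqref{eq:u1estim}. I would present the barrier argument first (it is self-contained and only uses Lemmas~\ref{lem:maxprinciple} and~\ref{lem:absch_u}), then derive $|w'(a)|\le C/\eps^*(a)$ as a corollary, and finally plug into the variation-of-constants formula, using Lemma~\ref{lem:intestim} for the one integral estimate. The only genuinely delicate point is verifying the barrier inequality $\mathcal{L}_{\eps^*}\Phi \ge \|f^*\|_\infty$ uniformly in $\eps^*$ — here one must check that the affine part contributes $\tfrac{b^*+(\eps^*)'}{\beta}\|f^*\|_\infty \ge \|f^*\|_\infty$ (using $b^*\ge\beta$ and $(\eps^*)'\ge 0$) plus a nonnegative $c^*$-term, while the layer part contributes the nonnegative amount computed above, so the sum dominates $\|f^*\|_\infty$; this is routine but is the step where all the hypotheses in \eqref{eq:bandepsstar} and the assumption $0\le(\eps^*)'$ are consumed.
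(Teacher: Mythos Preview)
Your integrating-factor representation for the flux $p=\eps^* w'$ is essentially the paper's own device: the paper writes $w'(x)=-z(x)+K_2\,\E^{-(B(x)-B(a))}$ with $B'=(b^*+(\eps^*)')/\eps^*$, which after multiplying by $\eps^*(x)$ is exactly your formula for $p$. Your bound on the integral term via Lemma~\ref{lem:intestim} with $\ell=0$ is correct. The gap is entirely in how you pin down the constant $p(a)=\eps^*(a)\,w'(a)$.

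The barrier argument you sketch for $|w'(a)|\le C/\eps^*(a)$ does not work as written. The inference ``$w(a)=0$, $|w|\le\Phi$, $\Phi$ differentiable $\Rightarrow$ $|w'(a)|\le\Phi'(a)$'' is only valid when $\Phi(a)=0$ (so that $\pm w-\Phi$ attains its maximum $0$ at the endpoint, forcing the one-sided derivative sign). Your explicit barrier $\Phi(x)=\tfrac{C}{\beta}\|f^*\|_\infty(x-a)+(|u_1|+C)\,\E^{-\beta e_a(x)}$ has $\Phi(a)=|u_1|+C>0$, so the implication fails; moreover your computed $\Phi'(a)$ is \emph{negative} for small $\eps^*(a)$ and hence cannot dominate $|w'(a)|$. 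Repairing this is not routine: the natural candidate $M\bigl(1-\E^{-\mu e_a}\bigr)$ has $\Phi(a)=0$ but $\mathcal{L}_{\eps^*}\Phi$ of the wrong sign (since $\mathcal{L}_{\eps^*}\E^{-\mu e_a}\ge 0$), while adding an affine piece destroys $\Phi(a)=0$.

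The paper bypasses barriers here altogether. After obtaining $w'(x)=-z(x)+K_2\,\E^{-(B(x)-B(a))}$ it integrates once more, using $w(a)=0$ to kill the additive constant, and then imposes the \emph{second} boundary condition $w(1)=u_1$:
\[
K_2\,\eps^*(a)\int_a^1 \frac{1}{\eps^*(t)}\,\E^{-\int_a^t b^*/\eps^*}\,\D t \;=\; u_1 + \int_a^1 z(t)\,\D t.
\]
Since $|z|\le C$ (same integral estimate you already did) and the left-hand integral is bounded below by a positive constant (substitute $s=e_a(t)$), one reads off $|K_2|\le C/\eps^*(a)$, i.e.\ $|p(a)|\le C$. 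This is a one-line computation using the boundary condition at $x=1$, which your argument never invokes; once you add it, the rest of your proof goes through and matches the paper.
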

\begin{proof}
	For the sake of readability, we drop the star from the notation of the functions $\eps^*$, $b^*$, $c^*$ and $f^*$ within this proof.
	Set $h \coloneqq f - c w$. The problem
	\begin{gather*}
		-w''(x) - \frac{b(x)+\eps'(x)}{\eps(x)} w'(x) = \frac{h(x)}{\eps(x)},\qquad w(a) = 0,\; w(1) = u_1
	\end{gather*}
	is equivalent to problem \eqref{eq:prob_widen}. It's solution $w$ admits the representation
	\begin{gather*}
		w(x) = w_p(x) + K_1 + K_2 \int_a^x \E^{-\big( B(t) - B(a)\big)} \D t,
	\end{gather*}
	where
	\begin{align*}
		w_p(x) &\coloneqq -\int_a^x z(t) \D t,\qquad z(x) \coloneqq \int_a^x \frac{h(t)}{\eps(t)} \E^{-\big(B(x)-B(t)\big)} \D t,\\
		B(x) &\coloneqq \int_a^x \frac{b(t) + \eps'(t)}{\eps(t)} \D t = \int_a^x \frac{b(t)}{\eps(t)} \D t + \ln \big(\eps(x) \big) - \ln \big(\eps(a) \big),
	\end{align*}
	i.e.~$B$ is an indefinite integral of $(b+\eps')/\eps$. The constants $K_1$ and $K_2$ may depend on $\eps$.
	The boundary condition $w(a) = 0$ yields $K_1 = 0$ whereas the other boundary condition $w(1) = u_1$ gives
	\begin{gather}
		\begin{split}
			u_1-w_p(1) &= K_2 \int_a^1 \E^{-\big( B(t) - B(a)\big)} \D t = K_2 \int_a^1 \E^{-\int_a^t \frac{b(z)}{\eps(z)} \D z + \ln \left( \frac{\eps(a)}{\eps(t)} \right)} \D t \\
			&= K_2\, \eps(a) \int_a^1 \frac{1}{\eps(t)} \E^{-\int_a^t \frac{b(z)}{\eps(z)} \D z} \D t.
		\end{split}
		\label{eq:boundarycond1}
	\end{gather}
	Because of Lemma \ref{lem:absch_u} we know $\|w\|_{\infty} \le C$. Thus
	\begin{gather}
		|z(x)| \le C \int_a^x \frac{1}{\eps(t)} \E^{-\big(B(x)-B(t)\big)} \D t.
		\label{eq:zestim}
	\end{gather}
	For $t \le x$ a simple calculation yields
	\begin{align*}
		\frac{1}{\eps(t)} \E^{-\big(B(x)-B(t)\big)} &= \E^{-B(x) +B(t) - \ln \big( \eps(t) \big)}
		= \E^{-\int_a^x \frac{b(z)}{\eps(z)} \D z + \int_a^t \frac{b(z)}{\eps(z)} \D z - \ln\big( \eps(x) \big)}\\
 		&= \frac{1}{\eps(x)} \E^{-\int_t^x \frac{b(z)}{\eps(z)} \D z} \le \frac{1}{\eps(x)} \E^{- \beta \int_t^x \frac{1}{\eps(z)} \D z} = \frac{1}{\eps(x)} \E^{- \beta \big( e_a(x) - e_a(t)\big)}.
	\end{align*}
	Inserting this estimate into \eqref{eq:zestim} and applying Lemma \ref{lem:intestim} with $\ell = 0$ we obtain
	\begin{align*}
		|z(x)| &\le \frac{C}{\eps(x)} \int_a^x \E^{- \beta \big( e_a(x) - e_a(t)\big)} \D t \le \frac{C}{\eps(x)} \E^{-\beta e_a(x)} \int_a^x \E^{\beta e_a(t)} \D t\\
		&\le \frac{C}{\eps(x)} \E^{-\beta e_a(x)} \frac{1}{\beta + \sigma_0} \eps(x) \E^{\beta e_a(x)} \le C.
	\end{align*}
	Moreover $|z(x)| \le C$ for all $x \in [0,1]$ implies $|w_p(1)| \le C$.
	We still need to estimate
	\begin{align*}
		\begin{split}
			\int_a^1 \frac{1}{\eps(t)} \E^{-\int_a^t \frac{b(z)}{\eps(z)} \D z} \D t &\ge \int_a^1 \frac{1}{\eps(t)} \E^{-\|b\|_\infty e_a(t)} \D t = \int_0^{e_a(1)} \E^{-\|b\|_\infty s} \D s\\
			&= \frac{1}{\|b\|_\infty} \left( 1 - \E^{-\|b\|_\infty e_a(1)} \right) \ge C.
		\end{split}
	\end{align*}
	Here we used the substitution $s = e_a(t)$ with $\D s/\D t = e_a'(t) = 1/\eps(t)$.
	Thus, with \eqref{eq:boundarycond1} we get
	\begin{gather*}
		|K_2| \le C \frac{1}{\eps(a)}.
	\end{gather*}
	Combining this with
	\begin{gather}
		w'(x) = -z(x) + K_2 \E^{-\big( B(x) - B(a)\big)},
		\label{eq:uprime}
	\end{gather}
	we obtain
	\begin{gather*}
		|w'(x)| \le |z(x)| + C \frac{1}{\eps(x)} \E^{-\beta e_a(x)} \le C \left(1 + \frac{1}{\eps(x)} \E^{-\beta e_a(x)} \right)
	\end{gather*}
	and \eqref{eq:u1estim} is verified.
\end{proof}

\begin{rem}
	Again in the classical case where $\eps^*$ is constant the formula \eqref{eq:u1estim} reduces to \eqref{eq:classic} which is known to be optimal.
\end{rem}

\begin{rem}
	An inspection of the proof of \eqref{eq:u1estim} shows that the assumption $\overline \eps \ll 1$ can be dropped provided $w$ remains uniformly bounded and $e_a(1)$ is sufficiently large --- Remark that $e_a$ is a strictly increasing function.
\end{rem}

	With \eqref{eq:u1estim} we readily obtain a pointwise estimate for $w''$.
	
\begin{lem}
	 Let $0 \le (\eps^*)'$ on $[a,1]$ and set $e_a(t) \coloneqq \int_a^t 1/\eps^*(z) \D z$. Then the solution $w$ of problem \eqref{eq:prob_widen} satisfies
	 \begin{gather}
		|w''(x)| \le C \frac{1+(\eps^*)'(x)}{\eps^*(x)} \left(1 + \frac{1}{\eps^*(x)} \E^{-\beta e_a(x)} \right),  \quad \text{$x \in [a,1]$}. \label{eq:u2bound}
	 \end{gather}
\end{lem}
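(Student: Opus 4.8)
The plan is to isolate $w''$ from the differential equation written in non-divergence form and then bound the resulting expression termwise using the pointwise estimates for $w$ and $w'$ already established in Lemmas \ref{lem:absch_u} and \ref{lem:absch_u1}.

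First I would rewrite problem \eqref{eq:prob_widen} in the equivalent non-divergence form
\begin{gather*}
	-\eps^*(x) w''(x) - \big(b^*(x) + (\eps^*)'(x)\big) w'(x) + c^*(x) w(x) = f^*(x),
\end{gather*}
and, dividing by $\eps^*(x) > 0$ and using the triangle inequality together with the boundedness of $b^*$, $c^*$ and $f^*$,
\begin{gather*}
	|w''(x)| \le \frac{1}{\eps^*(x)} \Big( \big(\|b^*\|_\infty + (\eps^*)'(x)\big) |w'(x)| + \|c^*\|_\infty\, |w(x)| + \|f^*\|_\infty \Big).
\end{gather*}

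Next I would insert the bound $|w(x)| \le C$ from \eqref{eq:uestim} and the bound $|w'(x)| \le C\big(1 + \eps^*(x)^{-1} \E^{-\beta e_a(x)}\big)$ from \eqref{eq:u1estim}. Since $b^*$ is bounded and $(\eps^*)' \ge 0$ on $[a,1]$, we have $\|b^*\|_\infty + (\eps^*)'(x) \le C\big(1 + (\eps^*)'(x)\big)$, so the convection contribution is at most $C\,\frac{1+(\eps^*)'(x)}{\eps^*(x)}\big(1 + \eps^*(x)^{-1} \E^{-\beta e_a(x)}\big)$, which is already of the claimed form. It remains to absorb the lower-order contribution $\eps^*(x)^{-1}\big(\|c^*\|_\infty |w(x)| + \|f^*\|_\infty\big) \le C\,\eps^*(x)^{-1}$ into the right-hand side of \eqref{eq:u2bound}; this is immediate because $1 \le 1 + (\eps^*)'(x)$ and $1 \le 1 + \eps^*(x)^{-1}\E^{-\beta e_a(x)}$, so $C\,\eps^*(x)^{-1} \le C\,\frac{1+(\eps^*)'(x)}{\eps^*(x)}\big(1 + \eps^*(x)^{-1}\E^{-\beta e_a(x)}\big)$. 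Collecting the two contributions yields \eqref{eq:u2bound}.

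There is essentially no genuine obstacle here: the proof is a one-line rearrangement of the equation followed by substitution of the two preceding lemmas. The only points requiring a little care are (i) checking that every constant stays independent of $\eps$ — which holds because $\|b^*\|_\infty$, $\|c^*\|_\infty$, $\|f^*\|_\infty$ are fixed and the estimates \eqref{eq:uestim}, \eqref{eq:u1estim} are $\eps$-uniform — and (ii) not discarding the possibly large factor $(\eps^*)'$, but rather carrying it inside the bound, which is exactly why it appears as the multiplier $(1+(\eps^*)')/\eps^*$ in \eqref{eq:u2bound}.
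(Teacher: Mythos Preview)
Your proof is correct and follows exactly the approach of the paper, which states only that the bound is ``an immediate consequence of \eqref{eq:prob_widen}, $\|w\|_\infty \le C$ and \eqref{eq:u1estim}.'' You have simply spelled out the details of that one-line argument.
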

\begin{proof}
	The result is an immediate consequence of \eqref{eq:prob_widen}, $\|w\|_\infty \le C$ and \eqref{eq:u1estim}.
\end{proof}

\begin{lem}\label{lem:u2estim}
	Suppose $0 \le (\eps^*)'$ on $[a,1]$. Then for the solution $w$ of Problem \eqref{eq:prob_widen}
		\begin{gather}		
			|w''(x)| \le C \left(1 + K_4(x) + \frac{1}{\eps^*(x)^2} \left( 1 + (\eps^*)'(a) + \|(\eps^*)''\|_{L^1(a,x)} \right) \E^{-\beta e_a(x)} \right) \label{eq:u2estim},  \quad \text{$x \in [a,1]$}
		\end{gather}	
		holds with $e_a(t) \coloneqq \int_a^t 1/\eps^*(z) \D z$ and $K_4(x) \le C \min \{\|(\eps^*)''\|_{\infty,(a,x)}, \frac{1}{\sqrt{\eps^*(x)}}\|(\eps^*)''\|_{0,(a,x)}\}$.
\end{lem}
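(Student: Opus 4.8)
The plan is to derive a \emph{first--order} linear ODE for $p\coloneqq w''$ by differentiating the expanded equation once, to solve it by an integrating factor, and then to estimate the resulting representation of $w''$ with the help of Lemmas~\ref{lem:absch_u}, \ref{lem:absch_u1} and \ref{lem:intestim}. As in the proof of Lemma~\ref{lem:absch_u1} I drop the stars from $\eps^*,b^*,c^*,f^*$; since the coefficients are smooth and $\eps$ is bounded away from $0$ we have $w\in C^3(a,1)$, so all the following manipulations are legitimate. Differentiating $\eps w''+(\eps'+b)w'-cw+f=0$ and writing $h\coloneqq f-cw$ one obtains
\[ p'+\frac{2\eps'+b}{\eps}\,p=-\frac{(\eps''+b')\,w'+h'}{\eps},\qquad p\coloneqq w''. \]
The essential observation is that the only derivative of $\eps$ left on the right--hand side is $\eps''$, and that $2\eps'/\eps$ in the coefficient of $p$ is an exact logarithmic derivative, so the integrating factor $\mu(x)\coloneqq\eps(x)^2\eps(a)^{-2}\E^{\int_a^x b/\eps}$ (which satisfies $\mu(a)=1$) absorbs it completely. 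Multiplying by $\mu$ and integrating from $a$ to $x$ yields
\[ w''(x)=\frac{\eps(a)^2}{\eps(x)^2}\E^{-\int_a^x b/\eps}\,w''(a)-\frac{1}{\eps(x)^2}\int_a^x \eps(t)\,\E^{-\int_t^x b/\eps}\Big((\eps''(t)+b'(t))w'(t)+h'(t)\Big)\D t. \]

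For the boundary term I apply the already established bound \eqref{eq:u2bound} at $x=a$, where $e_a(a)=0$: this gives $|w''(a)|\le C\eps(a)^{-1}(1+\eps'(a))(1+\eps(a)^{-1})\le C\eps(a)^{-2}(1+\eps'(a))$ because $\eps(a)\le\overline\eps\le 1$; together with $\E^{-\int_a^x b/\eps}\le\E^{-\beta e_a(x)}$ (from $b\ge\beta$) the boundary term is therefore bounded by $C\eps(x)^{-2}(1+\eps'(a))\E^{-\beta e_a(x)}$, which is part of the claimed layer term. For the integrand I use $\|w\|_\infty\le C$ from Lemma~\ref{lem:absch_u} and $|w'(t)|\le C(1+\eps(t)^{-1}\E^{-\beta e_a(t)})$ from Lemma~\ref{lem:absch_u1}; since $h'=f'-c'w-cw'$ and $f',c',c$ are bounded, this yields
\[ \big|(\eps''(t)+b'(t))w'(t)+h'(t)\big|\le C\,(1+|\eps''(t)|)\Big(1+\frac{1}{\eps(t)}\E^{-\beta e_a(t)}\Big). \]

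The heart of the argument is the estimate of the integral. Using $\E^{-\int_t^x b/\eps}\le\E^{-\beta(e_a(x)-e_a(t))}$ and expanding the product above, one is left with $\eps(x)^{-2}$ times integrals of four types: (i) $\int_a^x\eps(t)\E^{-\beta(e_a(x)-e_a(t))}\D t$, (ii) $\int_a^x\eps(t)|\eps''(t)|\E^{-\beta(e_a(x)-e_a(t))}\D t$, (iii) $\int_a^x\E^{-\beta e_a(x)}\D t$ and (iv) $\int_a^x|\eps''(t)|\E^{-\beta e_a(x)}\D t$, the last two because $\E^{-\beta(e_a(x)-e_a(t))}\E^{-\beta e_a(t)}=\E^{-\beta e_a(x)}$. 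Types (iii) and (iv) are immediate and contribute $C\eps(x)^{-2}(1+\|(\eps^*)''\|_{L^1(a,x)})\E^{-\beta e_a(x)}$, the $L^1$--part of the layer term. For (i), Lemma~\ref{lem:intestim} with $\ell=1$, $\sigma_0=0$, $\gamma=\beta$ gives $\int_a^x\eps(t)\E^{\beta e_a(t)}\D t\le\beta^{-1}\eps(x)^2\E^{\beta e_a(x)}$, so after the prefactor $\eps(x)^{-2}$ this contributes only a constant --- the ``$1$'' in $1+K_4(x)$. For (ii), either pull out $\|(\eps^*)''\|_{\infty,(a,x)}$ and reuse the case $\ell=1$, or apply Cauchy--Schwarz together with Lemma~\ref{lem:intestim} for $\ell=2$, $\gamma=2\beta$, $\big(\int_a^x\eps(t)^2\E^{2\beta e_a(t)}\D t\big)^{1/2}\le(2\beta)^{-1/2}\eps(x)^{3/2}\E^{\beta e_a(x)}$; after the prefactor $\eps(x)^{-2}$ these give $C\|(\eps^*)''\|_{\infty,(a,x)}$ and $C\,\eps(x)^{-1/2}\|(\eps^*)''\|_{0,(a,x)}$ respectively, and the smaller of the two is exactly $K_4(x)$. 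Collecting the four contributions and the boundary term gives \eqref{eq:u2estim}.

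The main obstacle is the very first step: one must differentiate the equation for $w$ rather than the first--order equation for $w'$ (the latter would reintroduce terms of size $\eps'/\eps^2$), and recognise that the factor $\eps^2$ in the integrating factor $\eps^2\E^{\int b/\eps}$ is precisely what cancels the $\eps'$--dependence of the coefficient, leaving $\eps''$ alone on the right--hand side. The second nontrivial point is the Cauchy--Schwarz estimate that produces the $L^2$--form of $K_4$; it relies on the case $\ell=2$ of Lemma~\ref{lem:intestim}, where $\sigma_0=0$ is admissible since $\gamma=2\beta>0$. The rest is bookkeeping.
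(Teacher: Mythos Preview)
Your proof is correct and follows essentially the same route as the paper: differentiate the equation once to obtain a first--order ODE for $w''$ with integrating factor $\eps^2\E^{\int b/\eps}$, then estimate the resulting representation using the bound \eqref{eq:u2bound} at $x=a$, Lemmas~\ref{lem:absch_u} and~\ref{lem:absch_u1} for the integrand, and Lemma~\ref{lem:intestim} with $\ell=1,2$ to control the four integral pieces (the paper groups them as $I_0$, $I_1$ and defines $K_4$ as the integral underlying your type~(ii), but the substance is identical).
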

\begin{proof}In order to simplify the illustration we again drop the star from the notation of the functions $\eps^*$, $b^*$, $c^*$ and $f^*$ within this proof.
	A differentiation of \eqref{eq:prob_widen} yields
	\begin{gather*}
		-w^{(3)} - \frac{b + 2 \eps'}{\eps} w'' = \frac{f' + (b'+\eps'' - c)w' - c'w}{\eps} \eqqcolon g.
	\end{gather*}
	Thus, we obtain a differential equations for $\omega \coloneqq w''$, indeed $-\omega' -(b+2\eps')/\eps \, \omega = g$.
	Setting
	\begin{gather*}
		\tilde B(x) \coloneqq \int_a^x \frac{b(t) + 2 \eps'(t)}{\eps(t)} \D t = \int_a^x \frac{b(t)}{\eps(t)} \D t + 2 \ln \big( \eps(x) \big) - 2 \ln \big( \eps(a) \big)
	\end{gather*}
	(i.e.~$\tilde B$ is an indefinite integral of $(b+2\eps')/\eps$) the function $\omega$ can be represented as
	\begin{gather}
		\omega(x) = K_3 \E^{-\big(\tilde B(x) - \tilde B(a) \big)} - \int_a^x g(t) \E^{-\big(\tilde B(x) - \tilde B(t) \big)} \D t.
		\label{eq:wdgl}
	\end{gather}
	Here the constant $K_3$ may depend on $\eps$. Because of the identity
	\begin{gather*}
		\E^{-\big(\tilde B(x) - \tilde B(t) \big)} = \left( \frac{\eps(t)}{\eps(x)} \right)^2 \E^{-\int_t^x \frac{b(z)}{\eps(z)}\D z}
	\end{gather*}
	and $K_3 = \omega(a) = w''(a)$ the representation \eqref{eq:wdgl} implies
	\begin{gather}
		|\omega(x)| \le |w''(a)| \left( \frac{\eps(a)}{\eps(x)} \right)^2 \E^{- \beta e_a(x)} + \int_a^x |g(t)| \left( \frac{\eps(t)}{\eps(x)} \right)^2 \E^{-\beta \big( e_a(x) - e_a(t) \big)} \D t.
		\label{eq:estimw}
	\end{gather}
	Because $|g(t)| \le \frac{|b'(t)+\eps''(t) - c(t)|}{\eps(t)}|w'(t)| + \frac{|c'(t)| }{\eps(t)} |w(t)| + \frac{|f'(t)|}{\eps(t)}$ the integral in \eqref{eq:estimw} is dominated by the sum of the two integrals $I_0(x)$ and $I_1(x)$ with
	\begin{align*}
		I_0(x) &\coloneqq \int_a^x \left( \frac{|f'(t)|}{\eps(t)} + \frac{|c'(t)| }{\eps(t)} |w(t)|\right) \left( \frac{\eps(t)}{\eps(x)} \right)^2 \E^{-\beta \big( e_a(x) - e_a(t) \big)} \D t,\\
		I_1(x) &\coloneqq \int_a^x \frac{|b'(t)+\eps''(t) - c(t)|}{\eps(t)}|w'(t)| \left( \frac{\eps(t)}{\eps(x)} \right)^2 \E^{-\beta \big( e_a(x) - e_a(t) \big)} \D t.
	\end{align*}
	Using $\|w\|_\infty \le C$ and applying Lemma \ref{lem:intestim} with $\ell = 1$ we see that
	\begin{gather}
		I_0(x) \le C \frac{1}{\eps(x)^2} \E^{-\beta e_a(x)} \int_a^x \eps(t) \E^{\beta e_a(t)} \D t \le C \frac{1}{\eps(x)^2} \E^{-\beta e_a(x)} \eps(x)^2 \E^{\beta e_a(x)} \le C.
		\label{eq:I0}
	\end{gather}
	For $I_1$ the bound \eqref{eq:u1estim} yields with Lemma \ref{lem:intestim} ($\ell = 1$)
	\begin{gather}
		\begin{split}
			I_1(x) &\le C \frac{1}{\eps(x)^2} \E^{-\beta e_a(x)} \int_a^x \big(1 + |\eps''(t)|\big)\, \left(1 + \frac{1}{ \eps(t)} \E^{-\beta e_a(t)} \right)\, \eps(t) \E^{\beta e_a(t)} \D t\\
			&\le C \frac{1}{\eps(x)^2} \E^{-\beta e_a(x)} \int_a^x \eps(t) \E^{\beta e_a(t)} + |\eps''(t)| \eps(t) \E^{\beta e_a(t)} + 1 + |\eps''(t)| \D t\\
			&\le C \frac{1}{\eps(x)^2} \E^{-\beta e_a(x)} \left( \eps(x)^2 \E^{\beta e_a(x)} + K_4(x) + \left(1 + \|\eps''\|_{L^1(a,x)}\right) \right)\\
			&\le C\left( 1 + K_4(x) + \frac{1}{\eps(x)^2} \left(1 + \|\eps''\|_{L^1(a,x)} \right) \E^{-\beta e_a(x)} \right)
		\end{split}
		\label{eq:I1}
	\end{gather}
	with $K_4(x) \coloneqq \int_a^x |\eps''(t)| \eps(t) \E^{\beta e_a(t)} \D t$. H\"older's inequality and the Cauchy-Schwarz inequality yield for $K_4(x)$ with Lemma \ref{lem:intestim}
	\begin{subequations}
		\begin{align}
			K_4(x) &\le C \frac{1}{\eps(x)^2} \E^{-\beta e_a(x)} \|\eps''\|_{\infty,(a,x)} \int_a^x \eps(t) \E^{\beta e_a(t)} \D t \le C \|\eps''\|_{\infty,(a,x)}\\
			K_4(x) &\le C \frac{1}{\eps(x)^2} \E^{-\beta e_a(x)} \|\eps''\|_{0,(a,x)} \left(\int_a^x \eps(t)^2 \E^{2\beta e_a(t)} \D t\right)^{1/2} \le C \frac{1}{\sqrt{\eps(x)}}\|\eps''\|_{0,(a,x)}
		\end{align}
		\label{eq:K4all}
	\end{subequations}
	From \eqref{eq:u2bound} we deduce the bound
	\begin{gather}
		|w''(a)| \le C \frac{1 + \eps'(a)}{\eps(a)^2}.
		\label{eq:upp0}
	\end{gather}
	We conclude our proposition by collecting \eqref{eq:estimw}, \eqref{eq:I0}, \eqref{eq:I1}, \eqref{eq:K4all} and \eqref{eq:upp0}.
\end{proof}

\begin{thm}[Solution Decomposition]\label{thm:soldec}
	Suppose $0 \le \eps'$ on $[0,1]$ and define $e(t) \coloneqq \int_0^t 1/\eps(z) \D z$. Then there exists a constant $S_0$ with $|S_0| \le C$ such that the solution $u$ of \eqref{eq41} can be decomposed into the sum of a smooth part $S$ and an exponential boundary layer component $E^{BL}$, i.e.~$u = S + E^{BL}$ such that $S$ and $E$ solve the boundary-value problems
	\begin{subequations}
		\begin{alignat}{3}
			\mathcal{L}_\eps S &= f\quad \text{in $(0,1)$},\qquad& S(0) &= S_0,\quad& S(1) &= 0, \label{eq:bvpS}\\
			\mathcal{L}_\eps E^{BL} &= 0\quad \text{in $(0,1)$},\qquad& E^{BL}(0) &= - S_0,\quad& E^{BL}(1) &= 0.\label{eq:bvpE}
		\end{alignat}	
		Moreover there exists a constant $C$ such that for $x \in [0,1]$
		\begin{align}
			|S^{(k)}(x)| & \le C\quad \text{for $k=0,1$}, \label{eq:boundS01}\\
			|S''(x)| & \le C \frac{1+\eps'(x)}{\eps(x)}, \label{eq:boundS2}\\
			|\eps(x) S''(x) + \eps'(x) S'(x)| & \le C, \label{eq:boundS2n}\\
			\shortintertext{and}
			\big |(E^{BL})^{(k)}(x) \big | & \le C \frac{1}{\eps(x)^k} \E^{-\beta e(x)} \quad \text{for $k=0,1$}, \label{eq:boundE01}\\
			\big |(E^{BL})''(x) \big | &\le C \frac{1 + \eps'(x)}{\eps(x)^2} \E^{-\beta e(x)}. \label{eq:boundE2}
		\end{align}
	\end{subequations}
\end{thm}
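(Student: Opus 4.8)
The plan is to produce $S$ as the restriction to $[0,1]$ of the solution of an auxiliary problem posed on an \emph{extended} interval $[a,1]$ with $a<0$, chosen so that the boundary layer of that auxiliary problem — which sits at its left endpoint $x=a$ — has already decayed below any threshold relevant on $[0,1]$; the difference $E^{BL}:=u-S$ is then treated by a barrier argument together with the integral representation used in the proof of Lemma~\ref{lem:absch_u1}.

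For the construction of $S$, I would fix $a<0$ and extend $\eps,b,c,f$ to smooth functions $\eps^*,b^*,c^*,f^*$ on $[a,1]$ so that \eqref{eq:bandepsstar} holds with constants independent of $\eps$ and, in addition, $(\eps^*)'\ge 0$ on $[a,1]$ (one may extend $\eps$ to the left monotonically between $\tfrac12\eps(0)$ and $\eps(0)$ with $(\eps^*)'(a)=0$, extend $b$ keeping $b^*\ge\beta$, and extend $c$ keeping $c^*\ge0$). Let $\hat S$ solve \eqref{eq:prob_widen} with $u_1=0$, and set $S:=\hat S|_{[0,1]}$, $S_0:=\hat S(0)$, $e_a(t):=\int_a^t 1/\eps^*$. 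On $(0,1)$ the extended data agree with the original ones, so $S$ solves \eqref{eq:bvpS}. The point of the extension is the exponential gain: $\eps^*\le\overline\eps$ forces $e_a(0)\ge|a|/\overline\eps$, and choosing $|a|$ large enough that $\E^{-\beta e_a(0)}\le\underline\eps^2$ makes $\tfrac{1}{\eps^*(x)}\E^{-\beta e_a(x)}\le\underline\eps$ for every $x\in[0,1]$, so that all layer terms appearing in Lemmas~\ref{lem:absch_u},~\ref{lem:absch_u1} and in the bound \eqref{eq:u2bound}, restricted to $[0,1]$, are bounded. Consequently $\|\hat S\|_\infty\le C$ gives $|S_0|\le C$ and \eqref{eq:boundS01} for $k=0$, Lemma~\ref{lem:absch_u1} gives \eqref{eq:boundS01} for $k=1$, \eqref{eq:u2bound} gives \eqref{eq:boundS2}, and \eqref{eq:boundS2n} follows by rewriting $\mathcal{L}_\eps S=f$ as $\eps S''+\eps'S'=-f-bS'+cS$ and using $|S|,|S'|\le C$ together with the boundedness of $f,b,c$.

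For $E^{BL}=u-S$, linearity gives \eqref{eq:bvpE}. For \eqref{eq:boundE01} with $k=0$ I would apply the comparison principle induced by Lemma~\ref{lem:maxprinciple} with the barrier $C_0\,\E^{-\beta e(x)}$, $C_0\ge|S_0|$: since $b>\beta$ and $c\ge0$,
\[
  \mathcal{L}_\eps\bigl(\E^{-\beta e}\bigr)=\Bigl(\tfrac{\beta(b-\beta)}{\eps}+c\Bigr)\E^{-\beta e}\ge 0=\mathcal{L}_\eps E^{BL},
\]
while at the endpoints $C_0\ge|S_0|=|E^{BL}(0)|$ and $C_0\E^{-\beta e(1)}\ge0=\pm E^{BL}(1)$, so $\pm E^{BL}\le C_0\E^{-\beta e}$. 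For the first derivative I would re-run the representation argument of Lemma~\ref{lem:absch_u1} with $a=0$, right-hand side $-c\,E^{BL}$ (which already decays like $\E^{-\beta e}$ by the previous step) and left value $-S_0$: this changes only the additive constant $K_1=-S_0$, leaves the identity $w'=-z+K_2\E^{-(B(x)-B(0))}$ intact, yields $|z(x)|\le\tfrac{C}{\eps(x)}\E^{-\beta e(x)}$ at once, and — using the boundary condition at $x=1$ together with the fact that $\int_0^1\tfrac1{\eps(t)}\,\E^{-\int_0^t b/\eps\,\D z}\,\D t$ is bounded below by a positive constant — yields $|K_2|\le C/\eps(0)$; since $\E^{-(B(x)-B(0))}=\tfrac{\eps(0)}{\eps(x)}\E^{-\int_0^x b/\eps\,\D z}\le\tfrac{\eps(0)}{\eps(x)}\E^{-\beta e(x)}$, the estimate \eqref{eq:boundE01} for $k=1$ follows. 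Finally \eqref{eq:boundE2} comes from $\eps(E^{BL})''=-(\eps'+b)(E^{BL})'+c\,E^{BL}$ and the two bounds just obtained.

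The main obstacle is the extension step: one must choose the extension length $|a|$ in terms of $\underline\eps,\overline\eps$ (essentially $|a|\sim\overline\eps\ln(1/\underline\eps)$), keep $\eps^*,b^*,c^*,f^*$ within \eqref{eq:bandepsstar} with $\eps$-independent constants, and genuinely verify that the exponential factor $\E^{-\beta e_a(0)}$ dominates the negative powers of $\underline\eps$ carried by the layer terms of Lemma~\ref{lem:absch_u1} and of \eqref{eq:u2bound}. A related subtlety is that the spurious additive $1$ in a crude bound such as $|u'|\le C\bigl(1+\tfrac{1}{\eps}\E^{-\beta e}\bigr)$ cannot be cancelled merely by subtracting $|S'|\le C$; removing it for $E^{BL}$ really needs the integral representation, which is why the derivative estimate for $E^{BL}$ reproduces the mechanism of Lemma~\ref{lem:absch_u1} rather than quoting it directly.
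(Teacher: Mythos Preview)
Your proof is correct and follows essentially the same route as the paper: construct $S$ as the restriction to $[0,1]$ of the solution of the extended problem \eqref{eq:prob_widen}, bound $E^{BL}$ by a barrier, bound $(E^{BL})'$ via the integral representation of Lemma~\ref{lem:absch_u1}, and read off $(E^{BL})''$ from the equation. The only differences are bookkeeping: the paper uses the sharper estimate $\eps^*\le\underline\eps$ on $(a,0)$ (which your own extension actually gives) to obtain $e_a(0)\ge|a|/\underline\eps$ and hence takes the shorter extension $a<-\tfrac{1}{\beta}\underline\eps\ln\tfrac{1}{\underline\eps}$, and it derives \eqref{eq:boundS2}--\eqref{eq:boundS2n} directly from $\mathcal{L}_\eps S=f$ and \eqref{eq:boundS01} rather than by invoking \eqref{eq:u2bound}.
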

\begin{proof}
	We start off with the regular solution component $S$:
	Fix $a < -\frac{1}{\beta} \underline \eps \ln \frac{1}{\underline \eps} < 0$. On the the interval $(a,1)$ choose smooth extension $\eps^*$, $b^*$, $c^*$ and $f^*$ of $\eps$, $b$, $c$ and $f$ in such a way that $\eps^*$ is non-decreasing and the assumptions \eqref{eq:bandepsstar} are met.
Thus we can apply Lemma \ref{lem:absch_u} and Lemma \ref{lem:absch_u1} to the boundary value problem
\begin{gather*}
	-\big(\eps^* (S^*)'\big)' - b^* (S^*)' + c^* S^* = f^* \quad \text{in $(a,1)$},\qquad S^*(a) = 0,\quad S^*(1) = u_1
\end{gather*}
to obtain
\begin{align}
|S^*(x)| &\le C\qquad \text{and}\qquad |(S^*)'(x)| \le C \left(1 + \frac{1}{\eps^*(x)} \E^{-\beta e_a(x)} \right)\quad \text{for $x \in [a,1]$}. \label{eq:Sbounds}
\end{align}
Since $\eps$ is non-decreasing we can set $\underline \eps \coloneqq \eps(0) = \eps^*(0)$.
This implies for all $x \in (a,0]$ that $C \underline \eps \le \eps^*(x) \le \underline \eps$. Hence
\begin{gather*}
	\E^{-\beta e_a(0)} = \E^{-\beta \int_a^0 \frac{1}{\eps^*(z)} \D z} \le \E^{ \frac{\beta a}{\underline \eps}} < \underline \eps.
\end{gather*}
The fact that $e_a$ is a strictly increasing function implies $e_a(x) > e_a(0)$. Thus we arrive at
\begin{gather*}
	\E^{-\beta e_a(x)} \le \E^{-\beta e_a(0)} < \underline \eps \quad \text{for $x \in [0,1)$}.
\end{gather*}
From \eqref{eq:Sbounds} it now follows that
\begin{gather*}
	\big|(S^*)^{(k)}(x)\big| \le C\quad \text{for $k=0,1$ and $x \in [0,1]$}
\end{gather*}
since $\eps^*|_{(0,1)} = \eps \ge \underline \eps$. Setting $S \coloneqq S^*|_{(0,1)}$ it satisfies the boundary value problem \eqref{eq:bvpS} because $b^*|_{(0,1)} = b$, $c^*|_{(0,1)} = c$ as well as $f^*|_{(0,1)} = f$. The bound on $S^*$ and $(S^*)'$ yields \eqref{eq:boundS01}, in particular $S(0) = S_0 \coloneqq S^*(0)$ with $|S_0| \le C$. Since \eqref{eq:boundS2} and  \eqref{eq:boundS2n} are immediate consequences of \eqref{eq:bvpS} and \eqref{eq:boundS01} all propositions for the regular part $S$ are verified.

To bound the layer component $E^{BL}$ we use the barrier functions $\phi^\pm$ defined by
\begin{gather*}
	\phi^\pm (x) = \pm \big|E^{BL}(0)\big| \E^{-\beta e(x)}.
\end{gather*}
A simple calculation yields
\begin{align*}
	\big( \mathcal{L}_\eps \phi^+ \big)(x) &= \big|E^{BL}(0)\big| \left( \beta \frac{b(x) + \eps'(x) -\beta - \eps'(x)}{\eps(x)} + c(x) \right) \E^{-\beta e(x)} \ge 0 = \big( \mathcal{L}_\eps E^{BL} \big)(x),\\
	\phi^+(0) &= \big|E^{BL}(0)\big| \ge E^{BL}(0),\\
	\phi^+(1) &= \big|E^{BL}(0)\big| \E^{-\beta E(1)} \ge 0 = E^{BL}(1).\\
\end{align*}
Hence $E^{BL}(x) \le \phi^+(x) \le C \E^{-\beta e(x)}$. Because $\phi^- = -\phi^+$ the estimate \eqref{eq:boundE01} for $k=0$ follows.

For the first derivative of the boundary layer term $E^{BL}$ we use the representation
\begin{align*}
	E^{BL} (x) &= \int_x^1 z(s) \D s + K \int_x^1 \E^{-B(s)} \D s
	\shortintertext{with}
	z(x) &= -\int_0^x \frac{b(s)+\eps'(s)}{\eps(s)} E^{BL} (s) \E^{-\big(B(x)-B(s)\big)} \D s,\\
	B(x) &= \int_0^x \frac{b(s)+\eps'(s)}{\eps(s)} \D s = \int_0^x \frac{b(s)}{\eps(s)} \D s + \ln\big(\eps(x)\big) - \ln\big(\eps(0)\big).
\end{align*}
The estimate \eqref{eq:boundE01} with $k=0$ yields for $z$:
\begin{align*}
	|z(x)| &\le C \int_0^x \frac{b(s)+\eps'(s)}{\eps(s)} \E^{-\beta E(s)} \frac{\eps(s)}{\eps(x)} \E^{-\beta \big( e(x) - e(s) \big)} \D s\\
	& \le C \frac{1}{\eps(x)} \E^{-\beta e(x)} \int_0^x \big( 1+ \eps'(s) \big) \D s \le C \frac{1}{\eps(x)} \E^{-\beta e(x)}.
\end{align*}
The constant $K$ is governed by the boundary condition $E^{BL}(0) = u_0 - S_0$:
\begin{gather}
	K \int_0^1 \E^{-B(s)} \D s = u_0 - S_0 - \int_0^1 z(s) \D s.
	\label{eq:Kboundary}
\end{gather}
Using the substitution $t = e(s)$ with $\frac{\D t}{\D s} = e'(s) = \frac{1}{\eps(s)}$ we obtain
\begin{align*}
	\int_0^1 \E^{-B(s)} \D s &= \int_0^1 \frac{\eps(0)}{\eps(s)} \E^{-\int_0^s \frac{b(x)}{\eps(x)} \D x} \D s \ge \int_0^1 \frac{\eps(0)}{\eps(s)} \E^{-\|b\|_\infty e(s)} \D s = \eps(0) \int_0^{e(1)} \E^{-\|b\|_\infty t} \D t \ge C \eps(0)
\shortintertext{and}
	\int_0^1 z(s) \D s &\le \int_0^1 |z(s)| \D s \le C \int_0^1 \frac{1}{\eps(s)} \E^{-\beta e(s)} \D s \le C \int_0^{e(1)} \E^{-\beta t} \D t \le C \int_0^\infty \E^{-\beta t} \D t \le C.
\end{align*}
Hence \eqref{eq:Kboundary} gives $|K| \le \frac{C}{\eps(0)}$ and because
\begin{gather*}
	\big(E^{BL}\big)'(x) = -z(x) - K \E^{-B(x)}
\end{gather*}
we can estimate
\begin{gather*}
	\left| \big(E^{BL}\big)'(x) \right| \le |z(x)| + |K| \E^{-B(x)} \le C \frac{1}{\eps(x)} \E^{-\beta e(x)} + \frac{C}{\eps(0)} \frac{\eps(0)}{\eps(x)} \E^{-\beta e(x)}
\end{gather*}
which is \eqref{eq:boundE01} for $k=1$. For the remaining result \eqref{eq:boundE2} we use the differential equation \eqref{eq:bvpE} and the bounds \eqref{eq:boundE01}:
\begin{gather*}
	\left| \big(E^{BL}\big)''(x) \right| \le \frac{|b(x)|+\eps'(x)}{\eps(x)} \left| \big(E^{BL}\big)'(x) \right| + \frac{|c(x)|}{\eps(x)} \left| \big(E^{BL}\big)(x) \right| \le C \frac{1+ \eps'(x)}{\eps(x)^2} \E^{-\beta e(x)}.
\end{gather*}
\end{proof}

\begin{rem}
	It is possible to use Lemma \ref{lem:u2estim} to prove the existence of a solution decomposition
$u=S+E$ with the better bounds \eqref{eq:boundS01*} and \eqref{eq:boundE01*} for the derivatives of $S$
 and $E$. This requires the additional assumption that $\eps'$ is bounded, moreover additional assumptions
 concerning smooth extensions of $\eps''$.
\end{rem}

\end{section}

\end{document}